\theoremstyle{plain}
\newtheorem{thm}{Theorem}[section]
\newtheorem{cor}[thm]{Corollary}
\newtheorem{prop}[thm]{Proposition}
\theoremstyle{definition}
\newtheorem{defn}[thm]{Definition}
\newtheorem{definition}[thm]{Definition}
\newtheorem{remark}[thm]{Remark}
\newtheorem{ex}[thm]{Example}
\newtheorem{nota}[thm]{Notation}
\newtheorem{notarem}[thm]{Notation and Remarks}
\DeclareMathOperator{\tr}{tr}
\definecolor{dgreen}{RGB}{0,128,0}
\newcommand{\orb}{\mathcal O}
\newcommand{\R}{\mathbf R}
\newcommand{\Z}{\mathbf Z}
\newcommand{\bs}{\backslash}
\newcommand{\vol}{{\rm vol}}
\newcommand{\iso}{\iiso}
\newcommand{\supp}{{\rm supp}}
\newcommand{\restr}[1]{\lower0.4ex\hbox{$|$}\lower0.7ex
	\hbox{$\scriptstyle{#1}$}}
\definecolor{olive}{RGB}{128,128,0}
\definecolor{orange}{rgb}{1,0.5,0}
\definecolor{darkpastelpurple}{rgb}{0.59,0.44,0.84}
\newcommand{\cc}[1][U]{(\widetilde{#1},G_{#1}, \pi_{#1})}
\newcommand{\wtu}{\widetilde{U}}
\newcommand{\iiso}{\operatorname{Iso}}
\newcommand{\isomax}{\iiso^{\rm max}}
\newcommand{\coloneqq}{:=}
\newcommand{\End}{\operatorname{End}}
  \def\R{\mathbb R} \def\Z{\mathbb{Z}}
\def\O{{\mathcal{O}}}
\def\a{{\alpha}}
\def\g{{\gamma}}
\def\ga{{G_\a}}
\def\hm{{L^{(m)}}}
\def\tham{{\widetilde{H}^{(m)}_\a}}
\def\ham{{H^{(m)}_\a}}
\def\tha{{\widetilde{H}_\a}}
\def\ha{{H_\a}}
\def\psia{{\psi_\a}}
\def\too{{t\to 0^+}}
\def\ua{{U_\a}}
\def\va{{V_\a}}
\def\wa{{W_\a}}
\def\tu{{\widetilde{U}}}
\def\tul{{\widetilde{u}}}
\def\tua{{\widetilde{U}_\a}}
\def\tva{{\widetilde{V}_\a}}
\def\twa{{\widetilde{W}_\a}}
\def\tpsia{{\widetilde{\psi}_\a}}
\def\tx{{\widetilde{x}}}
\def\ty{{\widetilde{y}}}
\def\heat{{\left(\frac{\partial}{\partial t}+\Delta_x\right)}}
\def\heatp{{\left(\partial_t+\Delta^p_x\right)}}
\def\bs{{\backslash}}
\def\shag{{(4\pi t)^{-d/2}e^{-d(\tx,\g(\ty))^2/4t}}}
\def\sha{{(4\pi t)^{-d/2}e^{-d(\tx,\ty)^2/4t}}}
\def\shapl{{(4\pi t)^{-d/2}e^{-d^2(x,y)/4t}}}
\def\ehapl{{(u^p_0(x,y)+\dots +t^mu^p_m(x,y))}}
\def\ehap{{(u^p_0(\tx,\ty)+\dots +t^mu^p_m(\tx,\ty))}}
\def\ehagi{{(\g_{.,2}^*u^p_0(\tx,\ty)+ t\g_{.,2}^*u^p_1(\tx,\ty)+\dots)}}
\def\gn{{\Iso(N)}}
\def\e{{\epsilon}}
\def\rp{{\R_+}}
\def\tetaa{{\widetilde{\eta}_\a}}
\def\etaa{{\eta_\a}}
\def\Iso{{\operatorname{{Iso}}}}
\def\Fix{{\operatorname{{Fix}}}}
\def\x{{\rm{x}}}
\def\y{{\rm y}}
\def\z{{\rm z}}
\newcommand{\codim}{\operatorname{codim}}
\def\Id{{\operatorname{{Id}}}}
\begin{document}

\title{Do the Hodge spectra distinguish orbifolds from manifolds?\\ Part 1}

\author{Katie Gittins}
\address{Department of Mathematical Sciences, Durham University,
Mathematical Sciences \& Computer Science Building,
Upper Mountjoy Campus, Stockton Road,
Durham DH1 3LE,
United Kingdom.}
\email{katie.gittins@durham.ac.uk}

\author{Carolyn Gordon}
\address{Department of Mathematics, Dartmouth College, Hanover, NH, 03755, USA.}
\email{csgordon@dartmouth.edu}

\author{Magda Khalile}
\address{Institut f\"{u}r Analysis, Leibniz Universit\"{a}t Hannover, Welfengarten 1, 30167, Hannover, Germany.}
\email{magda.khalile@math.uni-hannover.de}

\author{Ingrid Membrillo Solis}
\address{Mathematical Sciences, University of Southampton, University Road,  Southampton SO17~1BJ, United Kingdom.}
\email{i.membrillo-solis@soton.ac.uk}

\author{Mary Sandoval}
\address{Department of Mathematics, Trinity College, 300 Summit Street, Hartford, CT, 06106, USA.}
\email{mary.sandoval@trincoll.edu}

\author{Elizabeth Stanhope}
\address{Department of Mathematical Sciences, Lewis \& Clark College, Portland, OR, 97219, USA.}
\email{stanhope@lclark.edu}

\subjclass[2020]{Primary: 58J53; Secondary: 53C20 58J50 58J37}
\keywords{Hodge Laplacian, orbifolds, isospectrality}

\begin{abstract}
We examine the relationship between the singular set of a compact Riemannian orbifold and the spectrum of the Hodge Laplacian on $p$-forms by computing the heat invariants associated to the $p$-spectrum. We show that the heat invariants of the $0$-spectrum together with those of the $1$-spectrum for the corresponding Hodge Laplacians are sufficient to distinguish orbifolds with singularities from manifolds as long as the singular sets have codimension $\le 3.$ This is enough to distinguish orbifolds from manifolds for dimension $\le 3.$ 
\end{abstract}

\maketitle
\tableofcontents

\setcounter{section}{1}
\section*{Introduction}

A (Riemannian) orbifold  is  a  versatile generalization  of  a (Riemannian) manifold that  permits  the  presence of well-structured  singular  points. Orbifolds appear in a variety of mathematical areas and have applications in physics, in particular to string theory.

Orbifolds are locally the orbit spaces of effective finite group actions on $\R^d$.  Orbifold singularities correspond to orbits with non-trivial isotropy and thus each singularity has an associated ``isotropy type.''   The notions of the Laplace-Beltrami operator and Hodge Laplacian extend to Riemannian orbifolds and many results on the spectral geometry of Riemannian manifolds, such as Weyl asymptotics and Cheeger's inequality, extend to this more general setting.    

Because  the possible presence of singular  points is a defining  characteristic  of  the class of  orbifolds,  the question ``Can one hear the singularities of an orbifold?" is  fundamental  in  the  study  of  the  spectral  geometry  of  orbifolds.    More precisely, one asks:

\begin{enumerate}
\item \label{it:orbimani1} Do spectral data distinguish orbifolds with singularities from manifolds?
\item  \label{it:orbimani2} Do spectral data detect the topology and geometry of the set of singular points including the isotropy types of singularities?
\end{enumerate}
We will always assume all orbifolds under consideration are connected and compact without boundary.

Many authors have addressed these questions for the spectrum of the Laplace-Beltrami operator.   However, the question of whether the spectrum of the Laplace-Beltrami operator \emph{always} distinguishes Riemannian orbifolds with singularities from Riemannian manifolds, remains open.    In this article, we focus primarily on question \ref{it:orbimani1} and consider whether additional spectral data, specifically the spectra of the Hodge Laplacians on $p$-forms, suffice to detect the presence of singularities.    We will say that two closed Riemannian orbifolds are $p$-\emph{isospectral} if their Hodge Laplacians acting on $p$-forms are isospectral.  In particular, 0-isospectrality means that the Laplace-Beltrami operators are isospectral.

We construct the fundamental solution of the heat equation and the heat trace for the $p$-spectrum on closed Riemannian orbifolds, analogous to that of the 0-spectrum in \cite{DGGW08} discussed below, and then apply the resulting spectral invariants to question \ref{it:orbimani1}.   Our main result is that the 0-spectrum and 1-spectrum together distinguish orbifolds with a sufficiently large singular set from manifolds.  Orbifolds are stratified spaces in which the regular points form an open dense stratum, and each lower-dimensional stratum is a connected component of the set of singularities of a given isotropy type. By the \emph{codimension} of the singular set, we mean the minimum codimension of the singular strata.

 \begin{thm}\label{thm:main1V4}  The $0$-spectrum and $1$-spectrum together distinguish closed Riemannian orbifolds with singular sets of codimension $\leq 3$  from closed Riemannian manifolds.
 \end{thm}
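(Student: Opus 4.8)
The plan is to compare the small-$t$ asymptotics of the heat traces $Z_p(t):=\tr\bigl(e^{-t\Delta_p^{\mathcal O}}\bigr)$ for $p=0$ and $p=1$ with those of a putative isospectral closed manifold. Suppose $\mathcal O$ is a closed Riemannian orbifold, $d=\dim\mathcal O$, whose singular set is nonempty and of codimension $\le 3$, and assume for contradiction that $\mathcal O$ is simultaneously $0$-isospectral and $1$-isospectral to a closed Riemannian manifold $M^{d}$. By the $p$-form heat-trace expansion constructed in the earlier sections (the $p$-form analogue of the $0$-form expansion of \cite{DGGW08}),
\[
Z_p(t)\ \sim\ (4\pi t)^{-d/2}\sum_{k\ge0}a_k^{(p)}(\mathcal O)\,t^{k}\ +\ \sum_{N}(4\pi t)^{-\dim N/2}\sum_{k\ge0}b_{k,N}^{(p)}\,t^{k},
\]
where $N$ ranges over the singular strata, the $a_k^{(p)}(\mathcal O)$ are the usual interior heat coefficients integrated over $\mathcal O$, and the leading coefficient of a stratum $N$ is, up to a fixed positive normalization,
\[
b_{0,N}^{(p)}\ =\ \vol(N)\,\frac{1}{|\Gamma_N|}\sum_{\gamma}\frac{\tr\bigl(\gamma|_{\Lambda^{p}}\bigr)}{\bigl|\det\bigl(I-\gamma|_{\nu_N}\bigr)\bigr|},
\]
the sum over the nontrivial isotropy elements $\gamma$ of $N$ with $\Fix(\gamma)=\overline N$ (so that $\det(I-\gamma|_{\nu_N})\ne0$). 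In particular $b_{0,N}^{(0)}>0$ for every singular stratum.

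The first step is to kill the odd-codimension strata using $0$-isospectrality alone. The heat trace of a closed manifold involves only the powers $t^{-d/2+k}$ with $k\in\Z_{\ge0}$, whereas a codimension-$c$ stratum contributes exactly the powers $t^{-d/2+c/2+k}$, $k\ge0$. Hence $0$-isospectrality with $M$ forces the coefficient of $t^{-(d-1)/2}$ in $Z_0$, which is a positive multiple of $\sum_{\codim N=1}b_{0,N}^{(0)}$, to vanish; being a sum of positive numbers, this means $\mathcal O$ has no codimension-$1$ stratum. With those excluded, the coefficient of $t^{-(d-3)/2}$ is a positive multiple of $\sum_{\codim N=3}b_{0,N}^{(0)}$, so $\mathcal O$ has no codimension-$3$ stratum either. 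Since the singular set is nonempty of codimension $\le3$, its codimension is therefore exactly $2$, and there is a codimension-$2$ stratum $N$. Because $\mathcal O$ has no codimension-$1$ stratum, the isotropy group $\Gamma_N$, which acts faithfully on the normal $2$-plane $\nu_N$ with no nonzero fixed vector, contains no reflection, hence is cyclic, $\Gamma_N\cong\Z_{m_N}$ with $m_N\ge2$; its nontrivial elements are the rotations $\gamma_k$ through $2\pi k/m_N$, $1\le k\le m_N-1$, each with $\Fix(\gamma_k)=\overline N$, $\bigl|\det(I-\gamma_k|_{\nu_N})\bigr|=4\sin^2(\pi k/m_N)$, and $\tr\bigl(\gamma_k|_{\Lambda^{1}}\bigr)=(d-2)+2\cos(2\pi k/m_N)$.

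The second step uses $1$-isospectrality at order $t^{-d/2+1}$, to which, besides the interior term, only the codimension-$2$ strata contribute: codimension-$1$ and codimension-$3$ strata are absent by the previous step, and a stratum of codimension $\ge4$ first contributes at order $t^{-d/2+2}$. For a closed Riemannian $d$-manifold one has $a_1^{(0)}=\tfrac16\int\tau$ (with $\tau$ the scalar curvature) and, by the Weitzenböck identity $\Delta_1=\nabla^*\nabla+\ric$ together with Gilkey's formula, $a_1^{(1)}=\int\bigl(\tfrac d6\tau-\tau\bigr)=(d-6)\,a_1^{(0)}$; the interior coefficients of $\mathcal O$ obey this same relation. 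Thus in the combination $Z_1-(d-6)Z_0$ the interior part of the order-$t^{-d/2+1}$ term cancels for both $\mathcal O$ and $M$, and isospectrality of $\mathcal O$ with $M$ forces
\[
0\ =\ \sum_{\codim N=2}\Bigl(b_{0,N}^{(1)}-(d-6)\,b_{0,N}^{(0)}\Bigr)\ =\ \sum_{\codim N=2}\frac{\vol(N)}{m_N}\sum_{k=1}^{m_N-1}\frac{\bigl[(d-2)+2\cos\tfrac{2\pi k}{m_N}\bigr]-(d-6)}{4\sin^2(\pi k/m_N)}.
\]
But $(d-2)+2\cos\theta-(d-6)=4+2\cos\theta\ge2>0$ and $\sin^2(\pi k/m_N)>0$, so every summand on the right is strictly positive; since at least one codimension-$2$ stratum occurs, the right-hand side is strictly positive, contradicting the displayed equality. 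Hence no closed Riemannian manifold is simultaneously $0$- and $1$-isospectral to $\mathcal O$, which is the assertion of the theorem.

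I expect the genuinely delicate part to be not the closing inequality $4+2\cos\theta>0$ — which is elementary and uniform in $d$ and in the orders $m_N$ — but the normalization bookkeeping behind it: one must extract from the $p$-form heat-kernel construction of the earlier sections that a stratum $N$ contributes to $Z_1$ with precisely the same weight $\vol(N)\,|\Gamma_N|^{-1}\,|\det(I-\gamma|_{\nu_N})|^{-1}$ per isotropy element $\gamma$ as it does to $Z_0$, only with the trace factor $\tr(\gamma|_{\Lambda^{1}})$ inserted, so that $b_{0,N}^{(1)}-(d-6)b_{0,N}^{(0)}$ collapses to the manifestly positive quantity above. Once the $0$-spectrum step has removed every odd-codimension stratum, the isotropy actions at issue are planar rotations, which keeps this matching — as well as the identification of which strata feed which power of $t$ — routine; the remaining ingredient, the universal relation $a_1^{(1)}=(d-6)a_1^{(0)}$ via Weitzenböck and Gilkey, is standard.
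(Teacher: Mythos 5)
Your proposal is correct and follows essentially the same route as the paper: the $0$-spectrum eliminates odd-codimension (primary) strata via the half-integer powers of $t$, the absence of codimension-one strata forces cyclic isotropy on the codimension-two strata, and the combination $c_2^1-(d-6)\,c_2^0$ isolates a strictly positive codimension-two contribution (the paper evaluates the trigonometric sums in closed form to get $\tfrac{(m-1)^2}{2}\vol(N)$, whereas you observe termwise positivity of $4+2\cos\theta$). The one imprecision is that only \emph{primary} strata contribute to the heat trace, so your coefficient argument excludes primary codimension-three strata rather than all of them; this is harmless because strata of minimal codimension are automatically primary, which is exactly how the paper phrases the same reduction.
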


Consequently:

\begin{cor}\label{thm:lowdim} For $d\leq 3$, the $0$-spectrum and $1$-spectrum together distinguish singular closed $d$-dimensional Riemannian orbifolds from closed Riemannian manifolds.
\end{cor}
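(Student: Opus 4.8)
The plan is to obtain this as an immediate consequence of Theorem~\ref{thm:main1V4}; the only thing that needs checking is that the codimension hypothesis of that theorem is automatically satisfied in dimension $\le 3$.

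First I would recall the stratification of an orbifold used above: the regular points form an open dense stratum of full dimension $d$, and every singular stratum is a connected manifold of strictly smaller dimension, hence of dimension at most $d-1$. Thus each singular stratum has codimension between $1$ and $d$, and since the codimension of the singular set is by definition the minimum of these, it too lies between $1$ and $d$. In particular, if $d \le 3$ then every singular closed $d$-dimensional Riemannian orbifold has singular set of codimension $\le 3$.

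With this observation in hand the corollary follows. Suppose $\orb$ is a singular closed $d$-dimensional Riemannian orbifold with $d \le 3$ that is $0$- and $1$-isospectral to a closed Riemannian manifold $M$. By the previous paragraph the singular set of $\orb$ has codimension $\le 3$, so Theorem~\ref{thm:main1V4} applies to the pair $\orb$, $M$ and yields a contradiction. Hence no singular closed Riemannian orbifold of dimension at most $3$ can be $0$- and $1$-isospectral to a closed Riemannian manifold.

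There is essentially no obstacle to overcome here: all of the analytic content — the heat-trace expansions for the $p$-spectrum and the comparison of the leading singular contributions to the $0$- and $1$-heat traces — is packaged inside Theorem~\ref{thm:main1V4}, and the corollary is a purely dimensional remark once one records that low-dimensional orbifolds cannot carry high-codimension singular strata.
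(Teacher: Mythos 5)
Your proof is correct and matches the paper's (implicit) argument: the corollary is stated as an immediate consequence of Theorem~\ref{thm:main1V4}, justified exactly by the observation that in dimension $d\le 3$ every singular stratum, and hence the singular set, has codimension at most $d\le 3$.
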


 The case $d=3$ is perhaps of particular interest since 3-dimensional orbifolds play a role in Thurston's Geometrization Conjecture, later proven by Grigori Perel'man.

In a forthcoming paper (Part 2), joint with Juan Pablo Rossetti, we address both questions \ref{it:orbimani1} and \ref{it:orbimani2} for the individual $p$-spectra. For example, we give conditions on the codimension of the singular set which guarantee that the volume of the singular set is determined by spectral data, and in many cases we show by providing counterexamples that the conditions are sharp.

\subsection{Partial review of known results for the Laplace-Beltrami operator.}

The earliest results in the spectral geometry of Riemannian orbifolds are those of Yuan-Jen Chiang \cite{YJC} who established the existence of the Laplace spectrum and heat kernel of a Riemannian orbifold, and Carla Farsi \cite{F01} who extended the Weyl law to the orbifold setting.  Results from a variety of authors soon followed, including several examples of isospectral non-isometric orbifolds with a variety of properties.   For example, Naveed Shams, the last author, and David Webb \cite{BSW06} produced arbitrarily large finite families of mutually strongly isospectral orbifolds with non-isomorphic maximal isotropy groups of the same order.  In particular, this implies that they are $p$-isospectral for all $p$.   Shortly afterward, J. P. Rossetti, Dorothee Schueth, and Martin Weilandt \cite{RSW08} produced examples of strongly isospectral orbifolds with maximal isotropy groups of distinct orders.

Emily Dryden and Alexander Strohmaier \cite{DS09} proved that hyperbolic orientable closed 2-orbifolds are 0-isospectral if and only if they have the same geodesic length spectrum and the same number of cone points of each order.  Benjamin Linowitz and Jeffrey Meyer \cite{LM17} obtained interesting results for locally symmetric spaces.  A series of papers extended to the orbifold setting the result of Robert Brooks, Peter Perry, and Peter Petersen \cite{BPP92} that a set $\mathcal I_k(d)$ of 0-isospectral $d$-manifolds, sharing a uniform lower bound $k$ on sectional curvature, contains manifolds of only finitely many homeomorphism types (diffeomorphism types for $d \neq 4$).  Specifically, the last author \cite{St05} obtained an upper bound on the order of isotropy groups that can arise in elements of the set $\mathcal I_k(d)$ (expanded to contain orbifolds), Emily Proctor and the last author \cite{PrSt10} showed orbifold diffeomorphism finiteness for $\mathcal I_k(2)$, Proctor \cite{Pr12} obtained orbifold homeomorphism finiteness for $\mathcal I_k(d)$ assuming only isolated singularities, and John Harvey \cite{H16} obtained orbifold homeomorphism finiteness for the full set $\mathcal I_k(d)$. In addition, Farsi, Proctor and Christopher Seaton \cite{FPS14} introduced and studied a stronger notion of isospectrality of orbifolds.

Recall that the trace of the heat kernel of the Laplace-Beltrami operator on a closed $d$-dimensional Riemannian manifold ${M}$ is a primary source of spectral invariants.  Denoting the spectrum by $\{\lambda_j\}_{j=1}^\infty$, the heat trace yields an asymptotic expansion of the following form as $t \downarrow 0^{+}$:
\begin{equation*}
    \sum_{j=1}^\infty e^{-\lambda_j t}
    \sim
    (4\pi t)^{-d/2} \sum_{k=0}^{\infty} a_k({M}) \, t^k.
\end{equation*}
The coefficients $a_k$ are spectral invariants referred to as the \emph{heat invariants}. The first two, for example, give the volume and the total scalar curvature of the Riemannian manifold.

A orbifold is said to be \emph{good} if it is a global quotient $\O=\Gamma\bs M$ of a manifold $M$ by a (possibly infinite) discrete group $\Gamma$ acting effectively and properly by diffeomorphisms.  (We say $M$ is a cover of $\O$ even though the group action need not be free.)   In \cite{D76}, Harold Donnelly obtained an asymptotic expansion of the form
$$
    \sum_{j=1}^\infty e^{-\lambda_j t}
    \sim
(4\pi t)^{-d/2} \sum_{k=0}^{\infty} c_k(\mathcal{\O}) \, t^{k/2}$$ for the heat trace of a good Riemannian orbifold by adding contributions from each element $\gamma\in \Gamma$.  Each $c_k(\O)$ is a sum of terms $c_k(\O,\gamma)$, $\gamma\in \Gamma$, which in turn is a sum of integrals over the connected components of the fixed point set of $\gamma$.  The integrands are universal expressions in the germs of the metric and the isometry $\gamma$. 

By applying 
Donnelly's heat trace asymptotics for good orbifolds, Craig Sutton \cite{S10} showed that if a closed Riemannian orbifold $\O$ with singularities and a closed Riemannian manifold $M$ have 0-isospectral finite Riemannian covers then they cannot be $0$-isospectral. In particular if $\O$ and $M$ have a common finite Riemannian cover, they cannot be $0$-isospectral.  Juan Pablo Rossetti and the second author showed that the finiteness of the cover can be removed in the special case that the common cover is a homogeneous Riemannian manifold; see \cite[Proposition 3.4]{GR03} and its correction in the errata \cite{GR21}.

In \cite{DGGW08}, Dryden, the second author, Sarah Greenwald and Webb extended the heat trace asymptotics to the case of arbitrary (not necessarily good) closed Riemannian orbifolds expressing the invariants $c_k(\O)$ as sums of contributions from the various \emph{primary} strata of the orbifold.   (See Notation and Remarks~\ref{notarem:isomax} for the definition of primary.)   As in the good case, the small-time heat trace expansion is of the form $(4\pi t)^{-d/2} \sum_{k=0}^{\infty} c_k(\mathcal{\O}) \, t^{k/2}$.    Primary strata of even, respectively odd, codimension in $\O$ contribute to coefficients $c_k(\O)$ for $k$ even, odd respectively.   As a consequence:

\begin{thm}[{\cite[Theorem 5.1]{DGGW08}}] \label{thm:dggw5.1}
A closed Riemannian orbifold that contains a primary singular stratum of odd codimension cannot be $0$-isospectral to any closed Riemannian manifold.
\end{thm}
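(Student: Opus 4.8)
The plan is to derive a contradiction from the small-time asymptotics of the two heat traces. Suppose, for contradiction, that a closed Riemannian orbifold $\O$ containing a primary singular stratum of odd codimension is $0$-isospectral to a closed Riemannian manifold $M$. Since $0$-isospectrality means the Laplace--Beltrami operators of $\O$ and $M$ have the same eigenvalues with multiplicities, the heat traces $\sum_j e^{-\lambda_j t}$ agree for all $t>0$, and hence so do their asymptotic expansions as $t\downarrow 0^+$. Writing both in the common form $(4\pi t)^{-d/2}\sum_{k\ge 0} c_k\, t^{k/2}$ (with $d=\dim\O=\dim M$, forced by comparing leading terms), we obtain $c_k(\O)=c_k(M)$ for every $k\ge 0$. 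On the manifold side the classical Minakshisundaram--Pleijel expansion has only integer powers of $t$, so $c_k(M)=0$ for every odd $k$; therefore
\[
    c_k(\O)=0\qquad\text{for every odd }k.
\]

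Next I would invoke the stratified form of $c_k(\O)$ from \cite{DGGW08}: $c_k(\O)$ is a sum of contributions $c_k(\O,N)$ indexed by the primary strata $N$ of $\O$, with $c_k(\O,N)=0$ unless $\codim N\le k$ and $\codim N\equiv k\pmod 2$, and — crucially — with $c_k(\O,N)$ equal to a \emph{strictly positive} constant times the $(d-k)$-dimensional Riemannian volume of $N$ whenever $\codim N=k$. The positive constant is the universal local quantity attached to the isotropy action on the normal space of $N$: at leading order it is a weighted sum of terms $1/|\det(I-A_\gamma)|$ over the nontrivial isotropy elements $\gamma$ fixing $N$, each of which is positive because $A_\gamma$ is a linear isometry of the normal space with no eigenvalue $1$. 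In particular, for $k$ odd, the coefficient $c_k(\O)$ receives contributions only from primary \emph{singular} strata of odd codimension.

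Now let $k_1$ be the minimal codimension among all primary singular strata of odd codimension; by hypothesis this set is non-empty, so $k_1$ is a well-defined odd integer in $\{1,\dots,d\}$, and there is at least one primary singular stratum of codimension exactly $k_1$. Apply the decomposition with $k=k_1$: the strata contributing to $c_{k_1}(\O)$ are precisely the primary singular strata of odd codimension $\le k_1$, and by minimality of $k_1$ these all have codimension exactly $k_1$. Hence $c_{k_1}(\O)$ is a sum of strictly positive volume terms, one per primary singular stratum of codimension $k_1$, so $c_{k_1}(\O)>0$. But $k_1$ is odd, so the first paragraph forces $c_{k_1}(\O)=0$, a contradiction. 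Therefore $\O$ cannot be $0$-isospectral to any closed Riemannian manifold.

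I expect the genuinely substantial step to be the one quoted from \cite{DGGW08}: establishing both the parity statement (a primary stratum of codimension $m$ contributes only at orders $t^{m/2}, t^{(m+2)/2},\dots$) and, above all, the strict positivity of the leading coefficient. This requires the local construction of the orbifold heat kernel near a primary stratum — realizing it on a uniformizing chart as an average over the finite isotropy group of perturbed Euclidean heat kernels — together with a Laplace-type analysis showing that the off-diagonal group terms integrate to a convergent expansion in $t^{1/2}$ whose leading term carries the asserted positive sign. Everything else is bookkeeping with the two asymptotic expansions.
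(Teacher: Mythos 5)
Your argument is correct and follows essentially the same route as the paper: the statement is cited from \cite{DGGW08}, but the paper's own machinery (Theorem~\ref{thm.asympt} with $p=0$, Notation~\ref{parity}, Proposition~\ref{prop: odd par} part~\ref{it:oddpar-1} and Remark~\ref{rem:p=0}) reproves it by exactly your reasoning — the coefficient of the lowest half-integer power $t^{(k_--d)/2}$ in the heat trace is a sum of terms $b_0^0(N)/|\Iso(N)|$ with $b_0^0(N)=\vol(N)\sum_{\gamma\in\isomax(N)}|\det(\Id-A_\gamma)|^{-1}>0$ over the primary strata of minimal odd codimension, whereas every such coefficient vanishes for a closed manifold. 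Your identification of the parity/positivity statement as the substantive input, established via the locally uniformized heat kernel averaged over the isotropy group, matches the paper's Section~\ref{sec:heattrace} and appendix.
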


Sean Richardson and the last author \cite{RS20} proved that an orbifold $\O$ is locally orientable if and only if $\O$ does not contain any primary singular strata of odd codimension.     They thus concluded that the 0-spectrum determines local orientability of an orbifold.
In addition,
\cite[Theorem 5.15]{DGGW08} shows that the $0$-spectrum distinguishes closed, locally orientable, 2-orbifolds with non-negative Euler characteristic from smooth, oriented, closed surfaces (in fact, a stronger result is proven there, see Remark~\ref{rem:orbisurfaces}).

\bigskip
This paper is organized as follows: Section \ref{sec:background} provides background on Riemannian orbifolds, their singular strata, differential forms and the Hodge Laplacian. In Section \ref{sec:heattrace}, we construct the heat kernel, heat trace, and heat invariants for $p$-forms on orbifolds.  Our construction follows the construction in the manifold case by Matthew Gaffney \cite{G58} and Vijay Kumar Patodi \cite{P71}, uses results of Donnelly and Patodi \cite{DP77} for good orbifolds, and parallels the adaptations in \cite{DGGW08}. Section \ref{applications} contains the proof of Theorem~\ref{thm:main1V4}. 
 
 In an appendix, we outline the computation of the heat invariants for good orbifolds in \cite{D76} and \cite{DP77}, indicating aspects of the construction that are applicable to more general settings.

\section{Riemannian Orbifold Background}
\label{sec:background}

\subsection{Definitions and Basic Properties}\label{background}
In this section we recall the definition and basic properties of a Riemannian orbifold. For comprehensive information about orbifolds see the paper \cite{Scott83} by Peter Scott, as well as the texts \cite{Thurston97} by William Thurston and \cite{ALR07} by Alejandro Adem, Johann Leida, and Yongbin Ruan. Here we follow a somewhat abridged form of the presentation given in \cite{G12} and \cite{DGGW08}.

\begin{definition}
\label{defn:ofld}~
Let $X$ be a second countable Hausdorff space.
\begin{enumerate}
\item For a connected open subset $U \subseteq X$, an \emph{orbifold chart} (of dimension $d$) over $U$ is a triple $\cc$ where $\wtu \subseteq \R^d$ is a connected open subset, $G_U$ is a finite group acting on $\wtu$ effectively and by diffeomorphisms, and $\pi_U\colon \wtu \to X$ is a map inducing a homeomorphism $\wtu/G \xrightarrow{\cong} U$. The open set $U$ is sometimes referred to as the image of the chart.

\item \label{it:injection} If $U \subseteq V \subseteq X$, an orbifold chart $\cc$ is said to \emph{inject} into an orbifold chart $\cc[V]$ if there exists a smooth embedding $i\colon \wtu \to \widetilde{V}$ and a monomorphism $\lambda\colon G_U \to G_V$ such that $\pi_U = \pi_V \circ i$ and $i \circ \gamma = \lambda(\gamma) \circ i$ for all $\gamma \in G_U$.

\item \label{it:ro} Orbifold charts $\cc$ and $\cc[V]$ on open sets $U$ and $V$ are said to be \emph{compatible} if for every $x\in U\cap V$, there exists a neighborhood $W\subset U\cap V$ of $x$ that admits an orbifold chart injecting into both $\cc$ and $\cc[V]$.   A $d$-dimensional \emph{orbifold atlas} on $X$ consists of a collection of compatible $d$-dimensional orbifold charts whose images cover $X$.

An \emph{orbifold} is a second countable Hausdorff topological space together with a maximal $d$-dimensional atlas.

\item A \emph{Riemannian structure} on an orbifold $\O$ is an assignment of a $G_U$-invariant Riemannian metric $\wtu$ to each orbifold chart $\cc$, compatible in the sense that each injection of charts as in part~\ref{it:injection} is an isometric embedding.

\item If an orbifold is the quotient space of a manifold under a
smooth proper action of a discrete group it is called a \emph{good} orbifold. Otherwise, it is called a \emph{bad} orbifold.
\end{enumerate}
\end{definition}

\begin{definition}\label{def:iso}
Let $\orb$ be an orbifold and let $x \in \orb$.
\begin{enumerate}

\item \label{it:welldef} We define the isotropy type of $x$ as follows:   A chart $\cc$ about $x$ defines a smooth action of $G_{U}$ on $\widetilde U \subset \mathbb R^d$. Fix a lift $\widetilde x \in \widetilde{U}$ of $x$ and let $\Iso(\widetilde{x})$ be the isotropy subgroup of $G_U$ at $\widetilde{x}$.   The map $\gamma\mapsto  d\gamma_{\widetilde{x}}\in \End(T_{\widetilde{x}}\widetilde{U})$, defines an injective linear representation of $\Iso(\widetilde{x})$.    Every finite-dimensional linear representation of a compact Lie group is equivalent to an orthogonal representation, unique up to orthogonal equivalence.    Thus $\Iso(\widetilde{x})$ can be viewed as a subgroup of the orthogonal group $O(d)$, unique up to conjugacy.    The conjugacy class of $\Iso(\widetilde{x})$ in $O(d)$ is independent both of the choice of the lift $\widetilde{x}$ of $x$ in $\widetilde{U}$ and of the choice of chart $\cc$ and is called the \emph{isotropy type} of $x$.

\item We say that $x$ is a \emph{regular point} if it has trivial isotropy type, and a \emph{singular point} otherwise.

\end{enumerate}
\end{definition}

We conclude our discussion of the basic properties of orbifolds by describing their singular stratification.

\begin{notarem}\label{notarem:isomax} Let $\orb$ be an orbifold.
\begin{enumerate} 
\item Define an equivalence relation on $\orb$ by saying that two points in $\mathcal O$ are \emph{isotropy equivalent} if they have the same isotropy type. The connected components of the isotropy equivalence classes of $\orb$ form a smooth stratification of $\orb$.    The strata whose corresponding isotropy types are non-trivial are called \emph{singular strata}.  (In the literature, the requirement that a singular stratum be connected is sometimes dropped.)

\item Every orbifold chart $\cc$ on $\orb$ also admits a smooth stratification whose strata are the connected components of the isotropy equivalence classes, where $\widetilde{x}$ and $\widetilde{y}$ in $\wtu$ are defined to be isotropy equivalent if $\Iso(\widetilde{x})$ and $\Iso(\widetilde{y})$ are conjugate in $G_U$.  We will refer to the strata of $\widetilde{U}$ that have non-trivial isotropy as \emph{pre-singular strata} and their union as the \emph{pre-singular set} of $\widetilde{U}$.  If $N$ is any stratum of $\orb$ that intersects $U$, then $\pi_U^{-1}(N\cap U)$ is a disjoint union of finitely many pre-singular strata of $\widetilde{U}$ each of which maps diffeomorphically to $N\cap U$ by $\pi_U$.

\item  Let $N$ be a singular stratum in $\orb$ and let $\iiso(N)<O(d)$ denote a representative of its isotropy type, unique up to conjugacy.  We will refer to $\iiso(N)$ as the \emph{isotropy group} of the stratum.   We denote by $\isomax(N)$ the set of all elements $\gamma \in \iiso(N)$ such that the dimension of the $1$-eigenspace of $\gamma$ is equal to the dimension of $N$.  We say that a stratum $N$ is \emph{primary} if $\isomax(N)$ is non-empty. (Item 4 below shows that this definition is equivalent to the one given in \cite{DGGW17}.)

 \item Similarly, if $W$ is a pre-singular stratum of an orbifold chart $\cc$, we let $\iiso(W)$ denote the common (up to conjugacy) isotropy group of its elements, we denote by $\isomax(W)$ the subset of those $\gamma\in \iiso(W)$ for which $W$ is open in the fixed point set of $\gamma$, and  we say that $W$ is \emph{primary} if $\isomax(W)$ is non-empty.     In particular, a pre-singular stratum $W$ of $\widetilde{U}$ is primary if and only if $\pi_U(W) $ is a (necessarily open) subset of a primary singular stratum of $\O$.

\end{enumerate}
\end{notarem}

An orbifold $\orb$ may contain some singular strata that are not primary. (See Example 2.16 in \cite{DGGW17}.)   However, the union of the primary strata is dense in the singular set as the following proposition shows.

\begin{prop}  Let $\orb$ be an orbifold.    Then:
\begin{enumerate}  
\item Every singular stratum of minimal codimension in $\orb$ is necessarily primary.   
\item Any component $C$ of the singular set that contains a non-primary singular stratum $N$ must also contain at least three primary singular strata. 
\end{enumerate}
\end{prop}

\begin{proof}
The first statement is immediate.  To prove the second, 
it suffices to prove the analogous statement for the connected components of the pre-singular set of any orbifold chart $\cc$.   Let $C$ be such a connected component that contains a non-primary pre-singular stratum $W$.   Each $\gamma \in \iiso(W)$ must lie in $\isomax(W')$ for some primary stratum $W' \subset C$ of lower codimension.  $\iiso(W)\nsubseteq \iiso(W')$ since  $\codim(W')< \codim(W)$, so the elements of $\iiso(W)$ must be distributed among at least two primary strata $W'$ and $W''$ of $C$.   In particular, we can choose $\gamma' \in \iiso(W)\cap \iiso(W')$ and $\gamma''\in \iiso(W)\cap\iiso(W'')$ such that $\gamma',\gamma''\notin \iiso(W')\cap \iiso(W'')$.    These conditions imply that $\gamma'\gamma''$ cannot lie in either $\iiso(W')$ or $\iiso(W'')$ and thus there must be a third primary stratum $W'''$ in $C$.
\end{proof}

\begin{ex}
By considering all the finite subgroups of the orthogonal group $O(2)$, we obtain a classification of the isotropy types of singular points that can occur in dimension $2$:
\begin{enumerate}
\item A rotation about the origin in $\R^2$ through an angle $\frac{2\pi}{k}$, for some integer $k\geq2$, generates a finite cyclic group of order $k$. The image of the origin in the quotient of $\R^2$ under this action is an isolated singular point called a \emph{cone point} of order $k$.
\item A reflection across a line through the origin generates a cyclic group of order $2$. The fixed points of the reflection correspond to a $1$-dimensional singular stratum in the quotient called a \emph{mirror} or \emph{reflector edge}.
\item A dihedral group generated by a pair of reflections across lines forming an angle $\frac{\pi}{k}$, for $2\leq k\in \mathbb Z^+$, yields a $0$-dimensional stratum in the quotient called a \emph{corner reflector} or \emph{dihedral point}. The corner reflector is not an isolated singular point, rather it is the point
where two mirror edges intersect.
\end{enumerate}
\end{ex}

\begin{ex}
The direct product $\O\times \O'$ of two orbifolds is again an orbifold.    For example, if $\O$ is a tear drop (an orbifold with a single cone point $x_0$ and underlying topological space the 2-sphere) and $M$ is a closed manifold, then $\O\times M$ is a bad orbifold with a unique singular stratum $\{x_0\} \times M$ of codimension $2$.
\end{ex}

\begin{nota}\label{nota:Rstuff} For $\gamma\in O(d)$, let $r$ be the multiplicity of the eigenvalue $-1$ if it occurs, and let $\{e^{\pm i \theta_j}\}_{j=1}^s$, where $\theta_1, \dots, \theta_s \in (0,\pi)$, be those eigenvalues of $\gamma$ with non-zero imaginary part, each repeated according to its multiplicity.   The expression $E(\theta_1, \theta_2, \dots, \theta_s;r)$ will be called the \emph{eigenvalue type} of $\gamma$.  When $r=0$ we write $E(\theta_1, \theta_2, \dots, \theta_s;)$ and when $s=0$ we write $E(;r)$. 

 The dimension of the $+1$ eigenspace of $\gamma$ is thus $d-2s-r$;  in particular, the parity of $r$ determines the parity of the codimension of the fixed point set of $\gamma$ in $\R^d$.
\end{nota}

\begin{remark} We take $\theta_j \neq \pi$ for all $1 \le j \le s$ for the notational convenience of letting $r$ count the total number of eigenvalues equal to $-1$. This convention can be dropped without affecting the results of Section~\ref{applications}.
\end{remark}

\subsection{Differential forms and the Hodge Laplacian}

We give a brief description of how to define differential forms on orbifolds.
For details of the constructions we refer to \cite{C19} or \cite{W12}. We take $\orb$ to be a $d$-dimensional orbifold throughout this section.

\begin{defn}\label{def:pform}
\hspace{1in}
\begin{enumerate}
\item As in the case of manifolds, a $p$-form $\omega$ on an orbifold $\O$ is defined to be a section of the $p$-th exterior power of the cotangent bundle $T^*(\orb)$.   The cotangent bundle and its exterior powers are examples of orbibundles; see, for example, \cite{C19} or \cite{G12} for an expository introduction to orbibundles or the references therein for more in-depth explanations.   We will only use the following:   If $U\subset \O$ is an open set on which there exists a coordinate chart $(\widetilde U, G_U, \pi_U)$, then any $p$-form $\omega$ on $U$ corresponds to a $G_U$-invariant $p$-form $\widetilde{\omega}$ on $\widetilde U$, which we call the \emph{lift} of $\omega$ to $\tu$. The differential form $\omega$ is said to be of \emph{class $C^k$} on $U$ if $\widetilde{\omega}$ is of class $C^k$.  The compatibility condition on overlapping charts in Definition~\ref{defn:ofld} part~\ref{it:ro} leads to a natural compatibility condition on the lifts of $p$-forms.   One can use partitions of unity to construct globally defined $p$-forms from $p$-forms defined on an open cover of $\O$.

\item For a Riemannian manifold $M$, recall that the  \emph{Hodge Laplacian $\Delta^p$} acting on the space of smooth $p$-forms $\Omega^p(M)$ is given by
\begin{align}
\label{defhodge}
\Delta^p\coloneqq -(d \delta + \delta d),
\end{align}
where $d$ is the exterior derivative and $\delta$ is the codifferential operator, i.e.\ the formal adjoint of $d$ obtained via the Riemannian metric.  It is straightforward to check that the Hodge Laplacian commutes with isometries.   The notion of Hodge Laplacian extends to Riemannian orbifolds $\O$ as follows:   Let $\omega$ be a smooth $p$-form on $\O$.  On each coordinate chart $\cc$, the $G_U$-invariance of the lift $\widetilde{\omega}$ and the fact that the Hodge Laplacian on Riemannian manifolds commutes with isometries together imply that $\Delta^p(\widetilde{\omega})$ is $G_U$-invariant.   We define $\Delta^p(\omega)$ on $U$ by
$$\widetilde{\Delta^p(\omega)}=\Delta^p(\widetilde{\omega}).$$
We can again use the fact that isometries between Riemannian manifolds intertwine the Hodge Laplacians along with the compatibility condition on orbifold charts to conclude that $\Delta^p(\omega)$ is well-defined on $\O$.
\end{enumerate}
 \end{defn}

\begin{remark}\label{hodgestar-adjoint} The results in the present work apply to both orientable and nonorientable orbifolds and manifolds.  In what follows the notation $d V_{M}$ indicates the Riemannian volume element in the orientable case, and the Riemannian density in the nonorientable case.
\end{remark}

It is known, see e.g.\ \cite[Theorem 4.8.1]{B99}, that the operator $\Delta^p$ viewed as an unbounded operator acting in $ L_p^2(\orb)$, the space of differential $p$-forms whose components are square-integrable functions, is essentially self-adjoint and has a purely discrete spectrum. By abuse of notation, in what follows we denote its closure by $\Delta^p$, which is then self-adjoint with compact resolvent. The spectral theorem thus applies to
$\Delta^p$ and we denote its eigenvalues by $0\leq\lambda_1 \leq \lambda_2\leq\dots \to +\infty$, with associated smooth eigenforms, denoted by $(\varphi_i)_i$, which form an orthonormal $ L_p^2$-basis.

We conclude this section by reviewing the notion of multi $p$-forms.
\begin{nota}\label{nota:doubleform}~
\begin{enumerate}
\item Recall that if $\pi: B\to M$ and $\pi':B'\to M'$ are vector bundles over manifolds $M$ and $M'$, then the external tensor product $B\boxtimes B'\to M\times M'$ is the vector bundle whose fiber over the point $(m,m')$ is $\pi^{-1}(m)\otimes \pi^{-1}(m')$.    Given a $C^\infty$ manifold $M$, let $M^k$ denote the $k$-fold Cartesian product $M\times \dots\times M$.  Sections of the $k$-fold external tensor product $\boxtimes^k(\wedge^p T^*(M))\to M^k$ are called multi $p$-forms on $M$ of order $k$.    In particular, a multi $p$-form of order $1$ is simply a $p$-form.  Those of order $2$ or $3$ are called double or triple $p$-forms, respectively.
Thus for example, if $(\mathcal U,x)$ and   $(\mathcal V,y)$ are local coordinate charts on $M$ and $F$ is a double $p$-form on $M$, then $F_{|\mathcal U\times \mathcal V}$ can be expressed as $\sum_{I, J} a_{I,J} dx^I \otimes dy^J,
$
where $a_{I,J} \in C^\infty(\mathcal U \times \mathcal V)$.  (Here $I$ and $J$ vary over $p$-tuples $ 1\leq i_1<\dots <i_p\leq d=\dim(M).$)

\item \label{it:gammasigma}  Given smooth manifolds $M$ and $N$ and smooth maps $\gamma: M\to M$ and $\sigma :N\to N$, denote by $\gamma_{1,\cdot}$ and $\sigma_{\cdot,2}$ the maps $M\times N\to M\times N$ given by $(a,b)\mapsto (\gamma(a), b)$ and $(a,b)\mapsto (a, \sigma(b))$.    If $M=N$, we also define $\gamma_{1,2}: M^2\to M^2$ by $(a,b)\mapsto (\gamma(a),\gamma(b))$.    We use analogous notation for maps of higher order Cartesian products.  Thus, for example, the map $\gamma_{1,\cdot,3}: M^3\to M^3$ is given by $(m_1,m_2,m_3)\mapsto (\gamma(m_1), m_2,\gamma(m_3)).$

\item \label{it:multipformofld} Analogous to the case of $p$-forms in Definition~\ref{def:pform}, we can use the notion of orbibundle to extend the definition of multiple $p$-forms directly to the orbifold setting, but we will not need the orbibundle formalism in what follows.   If $U$ and $V$ are open sets in an orbifold $\O$ on which there exist orbifold coordinate charts $\cc$ and $(\widetilde{V},G_V, \pi_V)$, then the restriction to $U\times V$ of a double $p$-form $F$ on $\O$ lifts to a section $\widetilde{F}$ of $\wedge^p T^*(\tu)\boxtimes \wedge^p T^*(\widetilde{V})$ that is invariant under pullback by each of the maps $\gamma_{1,\cdot}$ and $\sigma_{\cdot, 2}$ for $\gamma\in G_U$ and $\sigma\in G_V$.    Again, the compatibility condition on overlapping charts in Definition~\ref{defn:ofld} part~\ref{it:ro} of an orbifold leads to a natural compatibility condition on the lifts of double $p$-forms.  One can use partitions of unity to construct double $p$-forms on $\O$ from ones on an open cover of $\O\times\O$.   Multiple $p$-forms of any order are defined similarly.

\end{enumerate}

\end{nota}

Observe that if $F$ and $H$ are multi $p$-forms, say of orders $k$ and $\ell$, respectively, then $F\otimes H$ is a multi $p$-form of order $k+\ell$.

\begin{definition}\label{defcontraction}
Recall that if  $V_1,\dots,V_k$ are inner product spaces and if $1\leq i <j \leq k$ with $V_i=V_j$, then the contraction
$
C_{i,j}: V_1\otimes\cdots\otimes V_n\to V_1\otimes \cdots \otimes\widehat{V_i}\otimes \cdots\otimes\widehat{V_j}\otimes\cdots\otimes V_k $ is the linear map whose value on decomposable vectors is given by
$$\alpha_1\otimes \cdots\otimes \alpha_n \mapsto \langle \alpha_i, \alpha_j\rangle \alpha_1\otimes\cdots\otimes \widehat{\alpha_i}\otimes\cdots\otimes \widehat{\alpha_j}\otimes \cdots \otimes \alpha_k$$ where $\langle\,,\,\rangle$ is the inner product on $V_i$.  (Here we use hat notation such as $\widehat{V_i}$ or $\widehat{\alpha_i}$ to indicate that the corresponding factor is removed from the product.)

In particular, let $(M,g)$ be a Riemannian manifold, $F$ a multi $p$-form of order $k$ on $M$, and suppose $m=(m_1,\dots, m_k)\in M^k$ satisfies $m_i=m_j$.    Then we can use the inner product on $\wedge^pT^*_{m_i}(M)$ to define the contraction $C_{i,j}(F(m)).$

To define the contraction of such a multi $p$-form in the setting of an orbifold $\O$ at a point $m=(m_1,\dots, m_k)\in \O^k$ with $m_i=m_j$, let $\widetilde{F}$ be a local lift on a neighborhood $U_1\times \dots \times U_k$ as in Notation~\ref{nota:doubleform} part~\ref{it:multipformofld}.   Choose a lift $\widetilde{m}=(\widetilde{m}_1,\dots, \widetilde{m}_k)$ in such a way that $\widetilde{m}_i=\widetilde{m}_j$.  We can then define $C_{ij}(F(m))$  by the condition that it lifts to $C_{ij}(\widetilde{F}(\widetilde{m}))$.   This definition is independent of the choice of $\widetilde{m}$ (subject only to the condition $\widetilde{m}_i=\widetilde{m}_j$), and it is independent of the choice of chart since the compatibility condition on charts respects the Riemannian structure.
\end{definition}

\section{Heat trace for differential forms on orbifolds}
\label{sec:heattrace}

In Subsection~\ref{subsec:kernel} we will construct the fundamental solution of the heat equation for the Hodge Laplacian on arbitrary closed Riemannian orbifolds by first constructing a parametrix.  Our construction follows and generalizes the construction in the manifold case by Gaffney \cite{G58} and Patodi \cite{P71}.  (The assumption of orientability in the work of Gaffney and Patodi is not needed.)  In Subsection~\ref{subsec:Don}, we translate a result of Donnelly into our context and employ it
to develop the small-time asymptotic expansion of the heat trace.   Our arguments are similar to those used in \cite{DGGW08} to construct the heat kernel and heat trace asymptotics for the Laplacian acting on functions on a closed Riemannian orbifold.

\subsection{Heat kernel for $p$-forms on manifolds and orbifolds}\label{subsec:kernel}

\begin{definition}\label{fundsol}
We say that $K^p: \mathbb (0,\infty) \times  \O \times  \O \to \Lambda^p( T^* \O) \otimes \Lambda^p (T^* \O)$ is the \emph{heat kernel} or \emph{fundamental solution of the heat equation} for $p$-forms if it satisfies
\begin{enumerate}
\item $K^p(t,\cdot,\cdot)$ is a double $p$-form  for any $t\in (0,\infty)$;
\item $K^p$ is continuous in the three variables, $C^1$ in the first variable and $C^2$ in the second variable;
\item $(\partial _t + \Delta^p_x ) K^p(t,x,y)= 0$ for each fixed $y\in \O$, where $\Delta^p_x$ is the Hodge Laplacian with respect to the variable $x$;
\item\label{initcond} For every continuous $p$-form $\omega$ on $\O$ and for all $x\in \O$, we have
$$\lim_{t\to 0^+}\,\int_\O\,C_{2,3} K^p(t,x,y)\otimes \omega(y) \, dV_\O(y) = \omega(x).$$
\end{enumerate}
(Because the variable $t$ in the preceding definition is a real parameter, in contrast to a space variable, it is not counted when specifying indices in expressions using Notation~\ref{nota:doubleform} part~\ref{it:gammasigma}, or in expressions involving contractions using the notation from Definition~\ref{defcontraction}. Also, the variables over which a contraction is occuring will be repeated.  For example, in part~\ref{initcond} above, $C_{2,3} K^p(t,x,y)\otimes \omega(y)$ indicates contraction in the second and third entries, not counting $t$.)
\end{definition}

The same argument as in the manifold case shows that if the heat kernel exists, then it is unique and given by
\begin{equation}\label{fundsoldvp}
K^p(t,x,y)= \sum_{j= 1}^{\infty} \varphi_j\otimes\varphi_j(x,y) e^{-\lambda_j t}
\end{equation}
where $\{\varphi_j\}_{j=1}^\infty$ is an orthonormal basis of $L_p^2(\O)$ consisting of eigenfunctions and the $\lambda_j$ are the corresponding eigenvalues as above.  In particular, the heat kernel is invariant under any isometry $\gamma$, i.e.,
\begin{equation}(\gamma^*_{1,2} K^p)(t,x,y)=K^p(t,x,y)\end{equation}
in the notation of Notation~\ref{nota:doubleform} part~\ref{it:gammasigma}.

\begin{defn}\label{def.param}
A \emph{parametrix} for the heat operator on $p$-forms on $\O$ is a function
$H:(0,\infty)\times\O\times\O\to \Lambda^p(T^* \O) \otimes \Lambda^p(T^*\O)$  satisfying:
\begin{enumerate}
\item $H(t,\cdot,\cdot)$ is a double $p$-form for each $t\in \mathbb (0,\infty)$;
\item $H$ is  $C^\infty$ on $(0,\infty)\times\O\times\O$;
\item $(\partial_t + \Delta^p_x) H(t,x,y)$ extends continuously to $[0,\infty)\times\O\times\O$;
\item For every continuous $p$-form $\omega$ on $\O$ and for all $x\in \O$, we have $$\lim_{t\to 0^+}\,\int_\O\,C_{2,3} H(t,x,y)\otimes\omega(y)\,dV_\O(y) = \omega(x).$$
\end{enumerate}
\end{defn}

We first recall the construction of a parametrix by Patodi, see also Gaffney \cite{G58}.

\begin{prop}[{\cite{P71}}]
\label{localparam}
There exist smooth double $p$-forms $u^p_i$ for $i=0,1,2,\dots$ defined on a neighborhood of the diagonal of $M\times M$ in any Riemannian manifold $M$ of dimension $d$ (more precisely, $u^p_i(x,y)$ is well-defined whenever $y$ is in a normal neighborhood about $x$) satisfying the following.

\begin{enumerate}
\item \label{uzero}
If $\{\omega_1,\dots, \omega_r\}$, where  $r=\binom{d}{p}$, is an orthonormal basis of $\Lambda^p\,(T^*_xM)$, then
$$u_0^p(x,x)=\sum_{j=1}^r\, \omega_j\otimes \omega_j.$$
(Note that this expression is independent of the choice of orthonormal basis).   Equivalently under the identification of $\Lambda^p\,(T^*_xM)\otimes \Lambda^p\,(T^*_xM)$ with $\End(\Lambda^p\,(T^*_xM)) $,
$u^p_0(x,x)$ is the identity endomorphism of $\Lambda^p( T_x^*M)$.
\item \label{um} For each $m=1,2,\dots$, we have
$$(\partial_t + \Delta^p_x) H^{(m)}(t,x,y)= (4\pi)^{-d/2}e^{-d^2(x,y)/4t}t^{m-\frac{d}{2}}\Delta^p_x u^p_m(x,y),$$
 where
$$H^{(m)}(t,x,y)\coloneqq \shapl\ehapl,$$
and $d^2(x,y)$ is the square of the Riemannian distance between $x$ and $y$.
In particular, if $\psi:M\times M\to \R$ is supported on a sufficiently small neighborhood of the diagonal and is identically one on a smaller neighborhood of the diagonal, then  $\psi H^{(m)}$ is a parametrix for the heat operator on $p$-forms when $m>\frac{d}{2}$.

\item \label{traceinv} The $u_i^p$ are the unique double $p$-forms satisfying conditions~\ref{uzero} and \ref{um}.  If $\gamma$ is an isometry of an open set in the domain of $u^p_i $, then $\gamma^*_{1,2}u^p_i=u^p_i$.
\end{enumerate}
\end{prop}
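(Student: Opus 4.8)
The plan is to run the Minakshisundaram--Pleijel construction as adapted to $p$-forms by Patodi. Fix $y\in M$ and work in geodesic normal coordinates centred at $y$ on a ball of radius less than the injectivity radius; for $x$ in this ball write $r=d(x,y)$ and let $e_t(x,y)=(4\pi t)^{-d/2}e^{-r^2/4t}$. The object $u^p_i(x,y)$ is to be a double $p$-form, i.e.\ a section of $\Lambda^p T^*_x M\otimes\Lambda^p T^*_y M$; using parallel transport along the unique minimizing geodesic from $y$ to $x$ we identify $\Lambda^p T^*_y M$ with $\Lambda^p T^*_x M$ and thereby regard each $u^p_i(x,y)$ as an $\End(\Lambda^p T^*_x M)$-valued function, smooth away from $r=0$. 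The strategy is to substitute the formal series $H(t,x,y)=e_t(x,y)\sum_{i\ge 0}t^i u^p_i(x,y)$ into $\partial_t+\Delta^p_x$, collect the terms that are $e_t$ times a power of $t$, and solve the resulting recursion.

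The first key computation is the behaviour of the Gaussian factor: a direct calculation gives $(\partial_t+\Delta^0_x)e_t=\tfrac1t\,e_t\,\mathcal L_0$, where $\mathcal L_0$ is the first-order radial operator $r\partial_r+\tfrac r2\,\partial_r\log\Theta$ and $\Theta=\sqrt{\det g}$ in the chosen normal coordinates. Applying $\partial_t+\Delta^p_x$ to $e_t\,t^i u^p_i$ and using this, one obtains
$(\partial_t+\Delta^p_x)H = e_t\sum_{i\ge 0}t^{i-1}\big[(\mathcal L_0+i)u^p_i+\Delta^p_x u^p_{i-1}\big]$
(with the convention $u^p_{-1}=0$); here the content specific to $p$-forms, due to Patodi, is the verification that, expressed in the parallel (synchronous) frame along radial geodesics, $\Delta^p_x$ differs from the scalar model $\Delta^0_x$ only by terms that vanish to the appropriate order at the diagonal, so that they do not disturb the $\mathcal L_0$ part of the leading recursion. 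We are thus led to impose the transport equations $(\mathcal L_0)u^p_0=0$ and $(\mathcal L_0+i)u^p_i=-\Delta^p_x u^p_{i-1}$ for $i\ge 1$.

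For $i=0$ the equation $(\mathcal L_0)u^p_0=0$ integrates along each radial geodesic to $u^p_0=\Theta^{-1/2}\cdot(\text{parallel transport})$; since $\Theta\to 1$ as $x\to y$, this restricts at $x=y$ to the identity endomorphism of $\Lambda^p T^*_x M$, which is exactly statement~\ref{uzero} (and it is clearly independent of the choice of orthonormal basis). For $i\ge 1$, the operator $\mathcal L_0+i$ (after conjugating out the harmless factor $\Theta^{-1/2}$) becomes $r\partial_r+i$, and for each smooth right-hand side there is a unique solution bounded at $r=0$, given by the explicit radial integral $r^{-i}\int_0^r\rho^{\,i-1}(\cdots)\,d\rho$; because $i\ge 1$ this integral converges and defines a smooth function. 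By induction on $i$ this produces smooth double $p$-forms $u^p_i$ and shows they are uniquely determined, which is the uniqueness half of~\ref{traceinv}. Telescoping the recursion collapses all intermediate terms and leaves exactly $(\partial_t+\Delta^p_x)H^{(m)}(t,x,y)=(4\pi)^{-d/2}e^{-d^2(x,y)/4t}t^{m-d/2}\Delta^p_x u^p_m(x,y)$, proving~\ref{um}. For $m>d/2$ and a cutoff $\psi$ that is $1$ near the diagonal and supported in the normal neighbourhood, $\psi H^{(m)}$ satisfies Definition~\ref{def.param}: smoothness on $(0,\infty)\times\O\times\O$ is immediate; $(\partial_t+\Delta^p_x)(\psi H^{(m)})$ extends continuously to $t=0$ because on $\{\psi\equiv 1\}$ it is the displayed expression, whose size is controlled by $t^{m-d/2}\to 0$, while the terms involving derivatives of $\psi$ are supported where $d(x,y)\ge c>0$, hence are $O(t^\infty)$; and the initial-value condition holds because $u^p_0(x,x)=\Id$ makes $e_t$ an approximate identity concentrating at $x$.

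It remains to record isometry invariance in~\ref{traceinv}. An isometry $\gamma$ of an open set carries geodesics to geodesics and normal neighbourhoods to normal neighbourhoods, preserves $\Theta$, and commutes with parallel transport and with $\Delta^p$; hence $\gamma^*_{1,2}u^p_i$ satisfies the same transport equations and the same regularity at the diagonal as $u^p_i$, so $\gamma^*_{1,2}u^p_i=u^p_i$ by the uniqueness just established. (The whole construction is local and uses only the Riemannian metric, so no orientability hypothesis enters.) The main obstacle is the $p$-form bookkeeping in the second paragraph: one must check carefully that, in the parallel frame, the corrections to the Hodge Laplacian relative to the scalar model are lower order in $r$ near the diagonal, so that $u^p_0$ is still $\Theta^{-1/2}\cdot\Id$ and the recursion retains the scalar form; this is exactly the point isolated by Patodi in \cite{P71}, which we would either reproduce in detail or invoke.
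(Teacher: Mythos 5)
Your proposal is correct: it is the standard Minakshisundaram--Pleijel parametrix construction as adapted to $p$-forms by Patodi, which is precisely the content of the reference the paper cites for this proposition --- the paper itself gives no proof of Proposition~\ref{localparam}, stating it as a quotation of \cite{P71} (see also \cite{G58}). The one point you flag as deferred (that in the synchronous/parallel frame the Hodge Laplacian agrees with the scalar model to the order needed for the transport equations, so that $u_0^p=\Theta^{-1/2}\cdot(\text{parallel transport})$ and hence $u_0^p(x,x)=\Id$) is exactly the $p$-form-specific verification carried out in Patodi's paper, so invoking it there is consistent with how the paper treats this statement.
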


\begin{notarem}\label{notarem:hm} Fix $\e>0$ so that each $x\in\O$ is the center of a ``convex geodesic ball'' $W$ of radius $\epsilon$.   By this we mean that W admits a chart $(\widetilde{W},G_W,\pi_W)$, where $\widetilde{W}$ is a convex geodesic ball of radius $\epsilon$ centered at the necessarily unique point $\tx$ satisfying $\pi_W(\tx)=x$.   (In particular, $\tx$ has isotropy $G_W$.)     Cover $\O$ by finitely many such geodesic balls $\wa$ with charts
$(\twa, G_\a,\pi_\a)$, $\alpha=1,\dots, s$.  (Here we write $G_\a$ for
$G_{\wa}$ and $\pi_\a$ for $\pi_\wa$.)   Let $\tua\subset \twa$,
respectively $\tva\subset\twa$, be the concentric geodesic balls of radius $\frac{\e}{4}$, respectively
$\frac{\e}{2}$, and let $\ua=\pi_\a(\tua)$ and $\va=\pi_\a(\tva)$.  We may assume that the family of balls
$\{\ua\}_{1\le\a\le s}$ still covers $\O$.

For each $\a$ and each nonnegative integer $m$, we define  $$\tham:
(0,\infty)\times\twa\times\twa \rightarrow  \Lambda^p(\twa) \otimes \Lambda^p(\twa)$$ by
\begin{align}
\label{nota.hm}
\tham(t,\tx,\ty)\coloneqq\sha\ehap,
\end{align}
where the $u^p_i$ are the double $p$-forms defined in Proposition~\ref{localparam}.
Since each $\g\in\ga$ is an isometry of $\twa$, we have
$\gamma^*_{1,2}u^p_i=u^p_i$.

It follows that the double $p$-form (depending on the parameter $t$)

$$\sum_{\gamma\in\ga}\gamma^*_{\cdot,2}\tham$$
is $\ga$-invariant in both $\tx$ and $\ty$ and
thus descends to a well-defined function on
$(0,\infty)\times\wa\times\wa$, which we denote by $\ham$.

Let $\psia:\O\to\R$ be a $C^\infty$ cut-off function  which is identically one
on $\va$
and is supported in $\wa$.  Let $\{\etaa :\a=1,\dots,s\}$ be a partition of
unity on
$\O$ with the support of $\etaa$ contained in  $ \overline{\ua}$.   Define $\hm$ on $(0,\infty)\times\O\times\O$
by
\begin{equation}\label{eq.hm}
\hm(t,x,y)\coloneqq\sum_{\a
=1}^s\,\psia(x)\etaa(y)\ham(t,x,y).
\end{equation}

\end{notarem}
\begin{prop}\label{prop.param}
$\hm$ is a parametrix for the heat kernel on $\O$ when
$m>\frac{d}{2}$.

Moreover, the extension of $\heatp \hm(t,x,y)$ to $[0,\infty)\times\O\times\O$ is of class $C^k$ if $m>\frac{d}{2}+k$ for any $k\in \mathbb N$.

\end{prop}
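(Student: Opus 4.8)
The plan is to verify the four conditions of Definition~\ref{def.param} for $\hm$, and then to refine the analysis used for condition~(3) in order to read off the asserted $C^k$ regularity. Conditions~(1) and~(2) are immediate from the construction in Notation and Remarks~\ref{notarem:hm}: each $\ham$ is the pushdown to $\wa\times\wa$ of the double $p$-form $\sum_{\g\in\ga}\g^*_{\cdot,2}\tham$, which is $\ga$-invariant in both variables and is $C^\infty$ on $(0,\infty)\times\twa\times\twa$ because on the convex geodesic ball $\twa$ each function $(\tx,\ty)\mapsto d^2(\tx,\g(\ty))$ is smooth and the $u^p_i$ are smooth double $p$-forms; multiplying by the smooth cut-offs $\psia(x)\etaa(y)$ and summing over $\a$ in~\eqref{eq.hm} preserves smoothness and the double $p$-form property.

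For condition~(3) I would apply $\heatp$ to~\eqref{eq.hm} by the Leibniz rule. Since $\heatp$ differentiates only in $t$ and $x$ while $\etaa$ depends only on $y$, the outcome splits, chart by chart, into a \emph{main term} $\psia(x)\etaa(y)\,\heatp\ham(t,x,y)$ and \emph{error terms} in which at least one derivative lands on $\psia(x)$, namely $(\Delta^p_x\psia)(x)\,\etaa(y)\,\ham$ and a first-order term pairing $\nabla\psia(x)$ against $\nabla_x\ham$. Every derivative of $\psia$ is supported in $\overline{\wa}\setminus\va$ while $\etaa$ is supported in $\overline{\ua}\subset\va$; since these two closed sets are separated by a positive distance, the error terms are supported where $d(x,y)\ge c_\a>0$, and there $\ham$ together with all of its $(t,x,y)$-derivatives is bounded by a fixed power of $t^{-1}$ times $e^{-c_\a^2/4t}$, which tends to $0$ faster than any power of $t$; hence each error term extends to a $C^\infty$ function on $[0,\infty)\times\O\times\O$ vanishing to infinite order at $t=0$. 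For the main term I would pass to the chart $\twa$: since $\Delta^p_x$ commutes with the isometries $\g\in\ga$, the lift of $\heatp\ham$ equals $\sum_{\g\in\ga}\g^*_{\cdot,2}\bigl(\heatp\tham\bigr)$, and Proposition~\ref{localparam}, part~\ref{um}, gives $\heatp\tham(t,\tx,\ty)=(4\pi)^{-d/2}e^{-d^2(\tx,\ty)/4t}\,t^{m-\frac{d}{2}}\,\Delta^p_{\tx}u^p_m(\tx,\ty)$. Thus, after applying the cut-offs and pushing down, the main term is a finite sum of expressions $(4\pi)^{-d/2}e^{-d^2(\tx,\g(\ty))/4t}\,t^{m-\frac{d}{2}}\,b_\g(\tx,\ty)$ with $b_\g$ smooth and $t$-independent; as $m>\frac{d}{2}$ forces $m-\frac{d}{2}>0$, each such expression tends uniformly to $0$ as $t\to 0^+$ and extends continuously by $0$. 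This establishes condition~(3). Condition~(4), the initial condition, I would establish exactly as in the function case of \cite{DGGW08} (which in turn follows \cite{G58} and \cite{P71}): the integral $\int_\O C_{2,3}\hm(t,x,y)\otimes\omega(y)\,dV_\O(y)$ localizes near $y=x$ as $t\to0^+$; lifting to $\twa$ and using $\int_\O=\frac1{|\ga|}\int_{\twa}$, the substitution $\ty\mapsto\g^{-1}(\ty)$ together with the $\ga$-invariance of the lift $\widetilde\omega$ converts each of the $|\ga|$ summands $\g^*_{\cdot,2}\tham$ into the $\g=\id$ summand, up to terms that vanish in the limit, so each contributes $\widetilde\omega(\tx)$ by the local parametrix property (Proposition~\ref{localparam}, part~\ref{um}); the factor $1/|\ga|$ cancels the count of summands, and summing against the partition of unity $\{\etaa\}$ produces $\omega(x)$. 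Hence $\hm$ is a parametrix whenever $m>\frac{d}{2}$.

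For the ``Moreover'' I would rerun the computation of $\heatp\hm$ while tracking derivatives. The error terms already extend to $C^\infty$ functions vanishing to infinite order at $t=0$, so only the main term is at issue, and it suffices to show that every partial derivative of order $\le k$ in $(t,x,y)$ of $e^{-d^2(\tx,\g(\ty))/4t}\,t^{m-\frac{d}{2}}\,b_\g(\tx,\ty)$ is $O(t^{m-\frac{d}{2}-k})$ as $t\to0^+$: then $m>\frac{d}{2}+k$ forces $m-\frac{d}{2}-k>0$, so every such derivative tends to $0$ and the extension by $0$ is $C^k$. This is a routine heat-kernel-type estimate: writing $\rho=d^2(\tx,\g(\ty))\ge 0$, a $\partial_t$ produces $t^{m-\frac{d}{2}-1}$ or the term $\frac{\rho}{4t^2}e^{-\rho/4t}t^{m-\frac{d}{2}}=t^{-1}\bigl(\frac{\rho}{4t}e^{-\rho/4t}\bigr)t^{m-\frac{d}{2}}$, and a derivative in $\tx$ or $\ty$ produces a factor $\frac{1}{4t}\nabla\rho$ with $|\nabla\rho|=O(\rho^{1/2})$; since $u\mapsto u^{j}e^{-u/4}$ is bounded on $[0,\infty)$ for every $j\ge 0$, each differentiation costs at most one power of $t^{-1}$ while leaving behind a bounded function of $\rho/t$, and iterating yields the claimed bound. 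I expect the main obstacle to be precisely this derivative estimate together with the careful accounting of the $|\ga|$-normalization and the change of variables in condition~(4); neither step is deep, but both reward being carried out carefully. The two structural facts that make the lifted computations go through are the smoothness of $(\tx,\ty)\mapsto d^2(\tx,\g(\ty))$ on the convex geodesic balls $\twa$ and the isometry invariance $\g^*_{1,2}u^p_i=u^p_i$ from Proposition~\ref{localparam}, part~\ref{traceinv}.
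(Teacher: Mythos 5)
Your proposal is correct and follows essentially the same route as the paper: the paper likewise declares conditions (1)--(3) and the $C^k$ statement straightforward and writes out only the reproducing property, which it proves exactly as you do — lifting to $\twa$, substituting $\ty\mapsto\g^{-1}(\ty)$, using the $\ga$-invariance of $\tetaa$ and $\widetilde\omega_\a$ to reduce all $|\ga|$ summands to the identity summand, cancelling the $1/|G_\a|$ factor, and summing against the partition of unity. Your extra detail on the error terms supported off the diagonal and on the derivative bounds for the $C^k$ claim fills in precisely what the paper leaves implicit.
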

\begin{proof}
The proof is similar to that carried out in \cite{DGGW08} (see also references therein) for the case of the Laplacian acting on functions.   The first three properties of a parametrix and the final statement of the proposition are straightforward.     We include here the proof of the last property of a parametrix (the reproducing property).

Let $\omega$ be a continuous $p$-form on $\O$.
Let $\tpsia$ and $\tetaa$ be the lifts of $\psia$ and $\etaa$ to
$\twa$.
We consider the lifts so as to make use of Proposition~\ref{localparam}
part~\ref{um} and we also use the fact that the properties of a parametrix hold locally (see \cite[Remark 3.7]{DGGW08}).

Since $\supp(\etaa)\subset\overline{\ua}\subset\wa$, we have
\begin{equation}
    \label{eq.ham}\begin{aligned}
&\pi_\a^*\int_\O\psia(x)\etaa(y)C_{2,3}\ham(t,x,y)\otimes \omega(y)\,dy\\&
=\frac{\psia(x)}{|G_\a|}\sum_{\g\in\ga}\int_\twa\,\tetaa(\ty)C_{2,3}\g_{.,2}^*\tham(t,\tx,\ty)\otimes
\widetilde{\omega}_\alpha(\ty) \, d\ty,
\end{aligned}
\end{equation}
where $\widetilde{\omega}_\alpha$
  is the pull-back of $\omega_{|\twa}$ to
$\twa$ and $\tx$ is an arbitrarily chosen point in the preimage of $x$
under the map $\twa\to\wa$. We change variables in each of the integrals in the right-hand side of
Equation~\eqref{eq.ham}, letting $\tul=\g^{-1}(\ty)$.  Since
$\g$ is an isometry and since $\tetaa$ and $\widetilde{\omega}_\alpha$
are $\g$-invariant, each integral in the summand is equal to
\begin{equation}\label{eq.tham}\int_\twa\,\tetaa(\tul)C_{2,3} \tham(t,\tx,\tul)\otimes
\widetilde{\omega}_\alpha(\tul) \,d\tul.\end{equation}
As $\too$, the
integral \eqref{eq.tham} above converges to
$\tetaa(\tx)\widetilde{\omega}_\alpha(\tx)=\etaa(x)\pi_\a^*(\omega(x))$.  Noting
that $\psia\equiv 1$ on the support of $\etaa$, it follows that  both sides of
Equation~\eqref{eq.ham} converge to $\etaa(x)\pi_\a^*(\omega(x))$ as $\too$.    Since both sides of
Equation~\eqref{eq.ham} are identically zero when $x$ lies outside of
$\supp(\psia)\subset \wa$, the left-hand side converges to
$\etaa(x)\pi_\a^*(\omega(x))$ for each fixed $x\in \O$.
Thus
\[
\lim_\too\int_\O\psia(x)\etaa(y)C_{2,3} \ham(t,x,y)\otimes \omega(y)\,dy =\etaa(x)\omega(x),
\]
  and by  Equation~\eqref{eq.hm} we have
\[
\lim_\too\int_\O\,\hm(t,x,y)\omega(y)\,dy=\sum_\a\,\etaa(x)\omega(x)=\omega(x). \qedhere
\]
\end{proof}

\begin{nota}\label{conv} ~
\begin{enumerate}
\item For $A$ and $B$ continuous double $p$-form valued functions on $[0,\infty)\times\O\times\O$, we define the convolution $A*B$ on $(0,\infty)\times\O\times\O$ as
$$A*B(t,x,y)\coloneqq\int_0^t\,d\theta\int_\O\,C_{2,4} A(\theta,x,z)\otimes B(t-\theta,y,z) dV_{\mathcal O}(z).$$ 
\item Fix
$m>\frac{d}{2} +2$.  Define $\kappa_0(t,x,y)
:=\heat\hm(t,x,y)$ and, for $j=1,2,\dots$, set $\kappa_j(t,x,y)\coloneqq\kappa_0*\kappa_{j-1}$.
\end{enumerate}
\end{nota}

An argument analogous to that in the manifold case \cite{G58} shows that the series
\begin{equation}\label{lem.qm} P_m(t,x,y):=\sum_{j=1}^\infty\,(-1)^{j+1}\kappa_j(t,x,y)\end{equation}
converges uniformly and absolutely on $[0,T]\times\O\times\O$ for each $T>0$.  Thus $P_m$ is continuous.  Moreover, $P_m$ is of class $C^2$ on $\rp\times\O\times\O$ and for any $T>0$, there exists a constant $C$ such that
\begin{equation} P_m(t,x,y)\leq Ct^2\end{equation}
on $[0,T]\times\O\times\O$.
(Our indexing is different from that in \cite{G58}.   Our $\kappa_j$ is Gaffney's $\kappa_{j+1}$.)

We are now able to state the main theorem of this section, which follows in the same way as that in \cite{G58}.
\begin{thm}
\label{lem.conv}  Let $m>\frac{d}{2}+2$. Define  $P_m$ as in Equation~\eqref{lem.qm} and let
$$K^p(t,x,y) \coloneqq\hm(t,x,y)
+(P_m*\hm)(t,x,y).$$
Then $K^p$ is a heat kernel.
Moreover,
$$K^p(t,x,y)=\hm(t,x,y) +O(t^{m-\frac{d}{2}+1}), \text{ as } t\to 0^+.$$
\end{thm}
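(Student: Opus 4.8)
The plan is to follow the classical Levi parametrix (Duhamel) argument exactly as in Gaffney \cite{G58}, adapting only the bookkeeping to double $p$-forms and the convolution $*$ of Notation~\ref{conv}. First I would record the formal identity that a genuine heat kernel should satisfy: if we seek $K^p = \hm + (Q*\hm)$ for some unknown double $p$-form $Q$, then applying $\heatp$ and using $\heatp(Q*\hm) = Q - Q*\kappa_0$ (the $-Q*\kappa_0$ coming from differentiating the convolution in $t$, with the leading term $Q(t,x,y)$ produced by the $\theta\to t$ endpoint via the reproducing property of $\hm$, and the minus sign absorbed into $\kappa_0 = \heatp\hm$), one is led to the Volterra equation $Q = -\kappa_0 + Q*\kappa_0$, whose Neumann series solution is precisely $Q = -\sum_{j\ge 1}(-1)^{j+1}\kappa_j$... more carefully, matching signs gives $Q = P_m$ as defined in Equation~\eqref{lem.qm}. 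I would cite that the series for $P_m$ converges uniformly on $[0,T]\times\O\times\O$ and is $C^2$ with the bound $P_m(t,x,y)\le Ct^2$, which was already asserted above; and I would cite Proposition~\ref{prop.param} that $\hm$ is a parametrix with $\heatp\hm$ extending $C^k$ for $m > d/2 + k$.

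Next I would verify the four defining properties of Definition~\ref{fundsol} for $K^p = \hm + P_m*\hm$. Property (1), that $K^p(t,\cdot,\cdot)$ is a double $p$-form, is immediate from the definitions of $\hm$ and of $*$. Property (2), the regularity $C^1$ in $t$ and $C^2$ in $x$: $\hm$ is $C^\infty$ on $(0,\infty)\times\O\times\O$ by Proposition~\ref{prop.param}, and $P_m*\hm$ inherits enough smoothness because $P_m$ is $C^2$ and the convolution integrates against the smooth $\hm$; here one uses $m > d/2 + 2$ so that $\kappa_0$ and hence $P_m$ have two continuous $x$-derivatives. Property (3), the heat equation $\heatp K^p = 0$: this is exactly the Volterra identity verified in the first paragraph, $\heatp(\hm + P_m*\hm) = \kappa_0 + P_m - P_m*\kappa_0 = \kappa_0 - \kappa_0 = 0$ using $P_m = -\kappa_0 + P_m*\kappa_0$ (equivalently $P_m - P_m * \kappa_0 = -\kappa_0$). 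Property (4), the reproducing property as $t\to 0^+$: $\hm$ already reproduces by Proposition~\ref{prop.param}, and the correction term contributes nothing because $|(P_m*\hm)(t,x,y)| \le Ct^2$ times a bounded integral, so $\int_\O C_{2,3}(P_m*\hm)(t,x,y)\otimes\omega(y)\,dV_\O(y) \to 0$. Uniqueness of the heat kernel, already noted in the text via Equation~\eqref{fundsoldvp}, then identifies this $K^p$ with the spectral kernel.

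For the asymptotic statement $K^p(t,x,y) = \hm(t,x,y) + O(t^{m - d/2 + 1})$ as $t\to 0^+$, the content is to bound $(P_m*\hm)(t,x,y)$. I would estimate $|\kappa_0(t,x,y)| = |\heatp\hm(t,x,y)|$ using Proposition~\ref{localparam} part~\ref{um}: on each chart the leading error is of the form $(4\pi t)^{-d/2}e^{-d^2/4t} t^{m}\, \Delta^p_x u^p_m$, which after incorporating the Gaussian and the $(4\pi t)^{-d/2}$ normalization is $O(t^{m - d/2})$ uniformly (the cut-off functions $\psia,\etaa$ and their derivatives contribute only exponentially small terms away from the diagonal, hence are harmless). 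Then $\kappa_j = \kappa_0*\kappa_{j-1}$ gains a factor of $t$ and a constant at each convolution, so $\kappa_j = O(t^{m - d/2 + j})$ with geometric constants, whence $P_m = \sum (-1)^{j+1}\kappa_j = O(t^{m-d/2+1})$; convolving once more with the bounded $\hm$ gives $(P_m*\hm)(t,x,y) = O(t^{m-d/2+1})$ — in fact with an extra power of $t$, so the stated exponent is (more than) enough. The main obstacle, such as it is, is purely notational: one must carry the contraction indices $C_{2,3}$, $C_{2,4}$ and the placement of the isometry-equivariance (the $\gamma^*_{\cdot,2}$ averaging built into $\ham$) correctly through the Duhamel computation, so that "differentiating the convolution produces the identity plus a lower-order term" really does hold for double $p$-forms; but since every building block ($\hm$ a parametrix, $u^p_i$ isometry-invariant, $*$ associative on this class) has been set up above precisely to mirror the scalar case in \cite{DGGW08} and the manifold case in \cite{G58}, no genuinely new difficulty arises, and I would simply say "the construction proceeds verbatim as in \cite{G58}, mutatis mutandis."
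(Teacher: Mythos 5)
Your proposal is correct and takes essentially the same route as the paper, which gives no argument for this theorem beyond asserting that the construction ``follows in the same way as that in Gaffney'': the Duhamel/Volterra identity, the Neumann series for $P_m$, the verification of the four properties of Definition~\ref{fundsol}, and the estimate $\kappa_j=O(t^{m-\frac{d}{2}+j})$ yielding $(P_m*\hm)=O(t^{m-\frac{d}{2}+1})$ are exactly what is intended. The one point to tidy is the sign bookkeeping in the Volterra equation --- your $Q=-\kappa_0+Q*\kappa_0$ would produce a non-alternating Neumann series, whereas the series in Equation~\eqref{lem.qm} alternates, so the equation should read $Q=-\kappa_0-Q*\kappa_0$ --- but you flag this yourself (``matching signs gives $Q=P_m$'') and it does not affect the structure or validity of the argument.
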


\begin{nota}\label{nota.ha}  Let
$$\tha(t,\tx,\ty)=\sum_{\g\in\ga}\shag\ehagi.$$
Observe that $\tha$ is $\ga$-invariant in both $\tx$ and $\ty$ and thus is the pull-back
of a double $p$-form valued function, which we denote by $\ha$,
on $(0,\infty)\times\wa\times\wa$.
\end{nota}

As a consequence of Theorem~\ref{lem.conv}, we have:

\begin{thm}
\label{thm.heattrace}  In the notation of Equation \eqref{eq.hm}
and Notation~\ref{nota.ha}, the trace of the heat kernel has an asymptotic expansion as
$\too$ given by
\[
\tr K^p(t):=\int_\O\,C_{1,2}K^p(t,x,x)\,dV_{\mathcal O}(x)
\sim\,\sum_{\a=1}^s\,\int_\O\,\etaa(x)C_{1,2}\ha(t,x,x)\,dV_{\mathcal O}(x).
\]
\end{thm}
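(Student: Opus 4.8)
The plan is to reduce the heat trace of $K^p$ to that of the parametrix $L^{(m)}$ and then to rewrite the latter in the claimed form. By Theorem~\ref{lem.conv}, for $m>d/2+2$ we have $K^p(t,x,y)=L^{(m)}(t,x,y)+O(t^{m-d/2+1})$ uniformly on the compact space $\O\times\O$. Restricting to the diagonal, applying the pointwise contraction $C_{1,2}$ (a linear operation of bounded norm depending continuously on $x$), and integrating over $\O$, which has finite volume, the uniform remainder integrates to a scalar $O(t^{m-d/2+1})$, so
\[
\tr K^p(t)=\int_\O C_{1,2}L^{(m)}(t,x,x)\,dV_\O(x)+O(t^{m-d/2+1}).
\]
Since $m$ may be taken arbitrarily large while $\tr K^p(t)$ has a single well-defined small-time asymptotic expansion, it suffices to expand the integral on the right and let $m\to\infty$.

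First I would unwind Equation~\eqref{eq.hm}: on the diagonal, $L^{(m)}(t,x,x)=\sum_{\alpha=1}^s\psi_\alpha(x)\eta_\alpha(x)H^{(m)}_\alpha(t,x,x)$. The essential bookkeeping point is that, in Notation and Remarks~\ref{notarem:hm}, $\psi_\alpha\equiv 1$ on $V_\alpha$ while $\supp(\eta_\alpha)\subset\overline{U_\alpha}\subset V_\alpha$, so $\psi_\alpha\eta_\alpha=\eta_\alpha$ identically; hence
\[
\int_\O C_{1,2}L^{(m)}(t,x,x)\,dV_\O(x)=\sum_{\alpha=1}^s\int_\O\eta_\alpha(x)\,C_{1,2}H^{(m)}_\alpha(t,x,x)\,dV_\O(x).
\]
(Each summand may be read in the chart $(\widetilde{W}_\alpha,G_\alpha,\pi_\alpha)$: pulling back by $\pi_\alpha$ produces a factor $1/|G_\alpha|$, and by Definition~\ref{defcontraction} contraction commutes with lifting, so the summand becomes the $G_\alpha$-average over $\widetilde{W}_\alpha$ of $\widetilde{\eta}_\alpha\,C_{1,2}$ applied to the diagonal of the lift $\sum_{\gamma\in G_\alpha}\gamma^*_{\cdot,2}\widetilde{H}^{(m)}_\alpha$ of $H^{(m)}_\alpha$.)

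Next I would compare $H^{(m)}_\alpha$ with the $H_\alpha$ of Notation~\ref{nota.ha}: their lifts differ only by truncation of the $u^p_i$-series,
\[
\widetilde{H}_\alpha-\sum_{\gamma\in G_\alpha}\gamma^*_{\cdot,2}\widetilde{H}^{(m)}_\alpha=\sum_{\gamma\in G_\alpha}(4\pi t)^{-d/2}e^{-d(\tx,\gamma(\ty))^2/4t}\sum_{i>m}t^i\,\gamma^*_{\cdot,2}u^p_i(\tx,\ty).
\]
On the diagonal the $\gamma=\id$ term is $(4\pi t)^{-d/2}\sum_{i>m}t^i u^p_i(\tx,\tx)=O(t^{m+1-d/2})$, and each $\gamma\neq\id$ term carries the Gaussian $e^{-d(\tx,\gamma(\tx))^2/4t}$, which is exponentially small off $\Fix(\gamma)$ and whose integral over a tubular neighborhood of $\Fix(\gamma)$ contributes at most $O(t^{\codim(\Fix(\gamma))/2}\cdot t^{m+1-d/2})$; in all cases the integral against $\eta_\alpha C_{1,2}$ is $O(t^{m+1-d/2})$. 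Combining the three steps,
\[
\tr K^p(t)=\sum_{\alpha=1}^s\int_\O\eta_\alpha(x)\,C_{1,2}H_\alpha(t,x,x)\,dV_\O(x)+O(t^{m+1-d/2}),
\]
and since this holds for every $m$, both sides have the same small-time asymptotic expansion, the right-hand side being interpreted as the formal asymptotic series obtained by truncating the $u^p_i$ at arbitrarily high order.

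I expect the main obstacle to be not any single delicate computation but making the two \emph{uniformity} steps rigorous: first, that the pointwise double-$p$-form estimate of Theorem~\ref{lem.conv} genuinely survives restriction to the diagonal, contraction, and integration over $\O$ (which requires the $O$-estimate to be uniform in $(x,y)$, together with compactness of $\O$); and second, the Gaussian-decay bound controlling the truncation remainder near the fixed-point sets of the nontrivial $\gamma\in G_\alpha$. Both are available from the cited work of Gaffney, Patodi, and Donnelly, but should be invoked with care, and it is precisely these estimates that justify reading "$\sim$" in the statement as the term-by-term matching of asymptotic series.
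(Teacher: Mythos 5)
Your argument is correct and is essentially the paper's intended route: the paper states Theorem~\ref{thm.heattrace} as an immediate consequence of Theorem~\ref{lem.conv} (following \cite{G58} and \cite{DGGW08}), and your proof simply fills in the standard details --- restriction to the diagonal, the identity $\psi_\alpha\eta_\alpha=\eta_\alpha$ from the nesting $\supp(\eta_\alpha)\subset\overline{U_\alpha}\subset V_\alpha$, and control of the truncation tail of the $u^p_i$-series, with the right-hand side read as a formal asymptotic series.
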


\subsection{Heat invariants for $p$-forms on orbifolds}\label{subsec:Don}

We will use Theorem~\ref{thm.heattrace} together with a theorem of Donnelly \cite[Theorem 4.1]{D76} (see also \cite[Theorem 3.1]{DP77}) to compute the small-time asymptotics of the heat trace for the Laplacian on $p$-forms on closed Riemannian orbifolds.
Our presentation is similar to that in \cite{DGGW08} where the case $p=0$ is carried out.

\begin{notarem}\label{nota:don} ~
\begin{enumerate}
\item \label{it:dontriples} Let $\mathcal{T}$ be the set of all triples $(M,\gamma, a)$ where $M$ is a Riemannian manifold, $\gamma$ is an isometry of $M$ and $a$ is in the fixed point set $\Fix(\gamma)$ of $\gamma$.   A function $h:\mathcal{T}\to \R$ is said to satisfy the \emph{locality property} if $h(M,\gamma, a)$ depends only on the germs at $a$ of the Riemannian metric and of $\gamma$.     The function $h$ is said to be \emph{universal} if whenever
$\sigma:M_1\to M_2$ is an isometry between two Riemannian manifolds and $\gamma_i$ is an isometry of $M_i$ with $\gamma_2=\sigma\circ\gamma_1\circ\sigma^{-1}$, then $h(M_1,\gamma_1,a)=h(M_2,\gamma_2,\sigma(a))$ for all $a\in \Fix(\gamma_1)$.

\item We will also consider functions on the set $\mathcal{T}'$ of all $(M, \eta, \gamma, a)$ where $(M,\gamma, a)$ is given as in part~\ref{it:dontriples} and $\eta$ is a $\gamma$-invariant function on $M$.   The locality and universality properties are analogously defined.   (In the definition of locality, $h(M,\eta, \gamma, a)$ may also depend on the germ of $\eta$ at $a$.  In the definition of of universality, we have $h(M_1,\eta_1,\gamma_1,a)=h(M_2,\eta_2,\gamma_2,\sigma(a))$ if $\eta_2\circ\sigma=\eta_1$ and the other conditions in part~\ref{it:dontriples} hold.)

\item Let $M$ be a Riemannian manifold of dimension $d$ and let  $\gamma: M\to  M$ be an isometry.   Then each component of the fixed point set $\Fix(\gamma)$ is a totally geodesic, closed submanifold of $M$. (See \cite{K58}.) If $M$ is compact, then $\Fix(\gamma)$ has only finitely many components. In any Riemannian manifold, at most one component of $\Fix(\gamma)$ can intersect any given geodesically convex ball.  For $a\in \Fix(\gamma)$, note that the differential $d\gamma_a: T_a M \to T_a M$ fixes $T_aQ$ and restricts to an isomorphism of $T_a(Q)^\perp$, where $Q$ is the connected component of $\Fix(\gamma)$ containing $a$.
In particular, we have $\det(\Id_{\codim(Q)}-A_a)\neq 0$, where $A_a$ is the restriction of the action of $d\gamma_a$ to $(T_aQ)^\perp$ and $\codim(Q)$ denotes the codimension of $Q$.

\item \label{it:don-trace} For $\alpha$ in the orthogonal group $O(d)$, we denote by $\tr_p(\alpha)$ the trace of the natural action of $\alpha$ on $\wedge^p(\R^d)$.  For $M$ and $\gamma$ as in part~\ref{it:dontriples}, the Riemannian inner product on $T_a M$ allows us to identify $d\gamma_a$ with an element of  $O(d)$ and $\tr_p(d\gamma_a)$ coincides with the trace of $\gamma_a^*: \wedge^p(T_a^*M)\to  \wedge^p(T_a^*M)$.

\item \label{it:don-CFix} Let $M$ be a compact Riemannian manifold and  $\gamma: M \to  M$ be an isometry. We denote the set of connected components of the fixed point set of $\gamma$ by $\mathcal{C}(\Fix(\gamma))$.
\end{enumerate}

\end{notarem}

\begin{thm}\label{bkthm}\label{bklocal}
\hspace{1in}$\text{  }$
\newline\begin{enumerate}

\item \label{it:bkthm-global} \cite[Theorem 4.1]{D76}; \cite[Theorem 3.1]{DP77}. In the notation of Notation~\ref{nota:don}, there exist functions $b_k$, $k=0, 1, 2, \dots$, on $\mathcal{T}$ satisfying both the locality and universality properties such that if $M$ is a compact Riemannian manifold and  $\gamma: M \to  M$ is an isometry, then, as $t\rightarrow 0^+$,

\[
\int_{M} C_{1,2} \gamma^*_{\cdot, 2} K^p(t,x,x) \, dV_{M}(x) \sim \sum_{Q\in \mathcal{C}(\Fix(\gamma))} (4\pi t)^{-\dim(Q)/2} \sum_{k=0}^{\infty} t^k \int_{Q} b^p_k(\gamma,a) \, dV_Q (a).
\]
(Here we are writing $b_k(\gamma,a)$ for $b_k(M,\gamma, a)$.) Moreover,
\[
b^p_0(\gamma,a)=\frac{\tr_p(d\gamma_a)}{\lvert \det(\Id_{\codim(Q)}-A_a)\rvert}.
\]

\item \label{it:bkthmlocal} (Local version.)  There exist functions $c_k$, $k=0, 1, 2, \dots$, on $\mathcal{T}'$ satisfying both the locality and universality properties such that the following condition holds:  if $M$ is an arbitrary  Riemannian manifold, $\gamma$ is an isometry of $M$, and $\eta$ is a $\gamma$-invariant function on $M$ whose support is compact and geodesically convex, then as $t\to 0^+$, we have
\begin{multline*}
    \int_{M}\,(4\pi t) ^{-d/2} e^{-\frac{ d^2(x, \gamma(x))}{4t}}\eta(x)\left ( \sum_{j=0}^\infty t^j C_{1,2} \gamma^*_{\cdot, 2} u^p_j(x,x)\right) \, dV_{M}(x) \\
\sim \sum_{Q\in \mathcal{C}(\Fix(\gamma))} (4\pi t)^{-\dim(Q)/2} \sum_{k=0}^{\infty} t^k \int_{Q} c^p_k(\eta,\gamma,a) \, dV_Q (a),
\end{multline*}
where the $u^p_j$, $j=1,2,\dots$, are the double $p$-forms defined in Proposition~\ref{localparam}.  (We are writing $c_k(\eta,\gamma,a)$ for $c_k(M,\eta, \gamma, a).$   Note that there is at most one component $Q$ of $\Fix(\gamma)$ that intersects the support of $\eta$ and thus at most one non-zero term in the sum.)

Moreover, the dependence on $\eta$ is linear.  In particular, if $\eta\equiv 1$ near $a$, then $c_k^p(\eta, \gamma,a)=b_k^p(\gamma,a)$.
\end{enumerate}
\end{thm}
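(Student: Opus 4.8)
Part~\ref{it:bkthm-global} is quoted from \cite{D76,DP77}, so the content to be supplied is part~\ref{it:bkthmlocal}, which I would obtain from part~\ref{it:bkthm-global} together with the parametrix estimates of Section~\ref{subsec:kernel}. The first stage is localization: since $\eta$ is $\gamma$-invariant, $\supp(\eta)$ is $\gamma$-invariant, and being also geodesically convex it meets at most one component $Q$ of $Fix(\gamma)$, for two distinct such components would be joined by a minimizing geodesic lying in $\supp(\eta)$ which, as the unique minimizer between two $\gamma$-fixed points, must be pointwise fixed, a contradiction. If $\supp(\eta)\cap Fix(\gamma)=\emptyset$, then $d(x,\gamma(x))$ is bounded below on $\supp(\eta)$, the factor $e^{-d^2(x,\gamma(x))/4t}$ is $O(t^{\infty})$, and all $c^p_k$ are set to $0$, consistent with the empty right-hand sum. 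Otherwise I would pass to Fermi coordinates $(q,v)$ along $Q$, with $q\in Q$ and $v\in (T_qQ)^{\perp}$; because $Q$ is totally geodesic and $d\gamma$ equals $\Id\oplus A_q$ along $Q$, one has $d^2(x,\gamma(x))=|(\Id-A_q)v|^2+O(|v|^{3})$, a phase whose $v$-Hessian is nondegenerate exactly because $\det(\Id-A_a)\neq 0$. Expanding $\eta$, the Riemannian volume density, and the double $p$-forms $\gamma^*_{\cdot, 2}u^p_j(x,x)$ in Taylor series in $v$ and evaluating the resulting Gaussian integrals over $(T_qQ)^{\perp}$ produces an expansion of the form $\sum_k (4\pi t)^{-\dim(Q)/2}\,t^k\int_Q(\cdots)\,dV_Q$, and I would define $c^p_k(\eta,\gamma,a)$ as the resulting integrand. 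The leading Gaussian integral supplies the factor $|\det(\Id-A_a)|^{-1}$, while $u^p_0(a,a)$ being the identity endomorphism gives $C_{1,2}\gamma^*_{\cdot, 2}u^p_0(a,a)=\tr_p(d\gamma_a)$, so that the $k=0$ term reproduces the stated formula for $b^p_0$ when $\eta\equiv 1$.

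With the $c^p_k$ defined this way, universality and locality are automatic: the integrand is assembled functorially from the germs of the metric, of $\gamma$, of $\eta$, and of the $u^p_j$, and the $u^p_j$ are themselves universal and local by Proposition~\ref{localparam} part~\ref{traceinv}; hence an isometry intertwining the data carries the integrand to the integrand and therefore the asymptotic coefficients to the asymptotic coefficients, and the Laplace expansion at $Q$ sees only the jets along $Q$ near $a$. Linearity of $c^p_k$ in $\eta$ is clear, since $\eta$ enters the integrand linearly and the extraction of asymptotic coefficients is linear.

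The one substantive point is the identity $c^p_k(\eta,\gamma,a)=b^p_k(\gamma,a)$ when $\eta\equiv 1$ near $a$. By the locality just noted this is a statement about the germ at $a$, so it reduces to comparing the Laplace expansion above, carried out with $\eta\equiv 1$ near $Q$, with Donnelly's construction of the $b^p_k$. That construction, reviewed in the appendix, is the same computation: one replaces $K^p$ by the local parametrix $H^{(m)}$ of Proposition~\ref{localparam} — valid up to $O(t^{m-d/2+1})$ by Theorem~\ref{lem.conv}, since in the manifold case the chart isotropy groups are trivial so that $L^{(m)}$ agrees with $H^{(m)}$ near the diagonal — and applies the Laplace method to $\int_M C_{1,2}\gamma^*_{\cdot, 2}K^p(t,x,x)\,dV_M$ along $Fix(\gamma)$, the off-diagonal Gaussian decay of $K^p$ discarding everything away from $Fix(\gamma)$; letting $m\to\infty$ and invoking part~\ref{it:bkthm-global} then identifies those coefficients as the $b^p_k$, which therefore agree with the $c^p_k$ at $\eta\equiv 1$. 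If a self-contained global comparison is preferred, one may instead realize the germ $(M,\gamma,a)$ inside a compact model — a round sphere or flat torus carrying a global extension of $\gamma$, say — and run the same comparison there. I expect this reconciliation of the local, possibly non-compact, statement of part~\ref{it:bkthmlocal} with the global compact statement of part~\ref{it:bkthm-global} to be the main obstacle; the Laplace asymptotics and the functoriality are otherwise routine.
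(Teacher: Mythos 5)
Your proposal is correct and follows essentially the same route as the paper's appendix: localize to a tubular neighborhood of the (at most one) fixed-point component meeting $\supp(\eta)$, straighten the phase $d^2(x,\gamma(x))$ to a nondegenerate quadratic form via the Morse lemma, Taylor-expand the integrand and evaluate the Gaussian integrals, and read off the coefficients $c^p_k$ as universal local expressions. The only difference is presentational: the paper first proves the expansion for an arbitrary compactly supported smooth weight $f$ and then specializes to $f=\eta f^p_{k-j}$ versus $f=f^p_{k-j}$, which makes the identity $c_k^p(\eta,\gamma,a)=b_k^p(\gamma,a)$ for $\eta\equiv 1$ near $a$ immediate from the explicit common formula, rather than the separate reconciliation step you anticipate as the main obstacle.
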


\begin{remark} In the first statement in the previous theorem, we have omitted the hypotheses in \cite{DP77} that $M$ is orientable and that $\gamma$ is orientation-preserving as they are not needed for the proof.

The proof of the second statement is a minor adaptation of Donnelly's proof of the first statement.   In the appendix, we will give an exposition of the proof, making clear that aspects of the statement apply to much more general integrals.
\end{remark}

Recall that for closed Riemannian manifolds, the heat trace has a small-time asymptotic expansion \begin{equation}\label{manifoldasymp}
\tr K^p(t) :=\int_M C_{1,2}K^p(t,m ,m)\,dV_M(m)\sim_{t\to 0^+}
(4\pi t) ^{-d/2} \sum_{i=0}^{\infty} a^p_i(M) t^i,
\end{equation}

where
\[
a^p_i( M)\coloneqq \int_{ M} C_{1,2} u^p_i(m,m)\, dV(m).
\]
The $a_i^p$ are called the \emph{heat invariants}.

The first two heat invariants for $p$-forms on manifolds are given by
\begin{align}
\label{a10}
    a_0^p({M}) &= \binom{d}{p} \vol ({M}), \\
    \label{a11}
    a_1^p({M}) &= \left(\frac{1}{6}\binom{d}{p}-\binom{d-2}{p-1}\right) \int_{{M}} \tau (m) \, dV_M(m),
    \end{align}
where $\tau$ is the scalar curvature, and we use the convention that $\binom{m}{n}=0$ whenever $n < 0$ or $n > m$. (See, for example, \cite{P70} and references therein.)

We now address the heat invariants for the Hodge Laplacian on closed Riemannian orbifolds.
\begin{nota}\label{invariants} Let $\O$ be a closed Riemannian orbifold of dimension $d$.
\begin{enumerate}
\item \label{it:invariants-Ip0t}  For $k=0,1,2,\dots$, define $U_k^p\in C^\infty(\O)$ as follows:    For $x\in \O$, choose an orbifold chart $(\widetilde{U},G_U, \pi_U)$ about $x$, let $\tx$ be any lift of $x$ in $\widetilde{U}$ and set $U_k^p(x)\coloneqq C_{1,2}u_k^p(\tx,\tx)$ where $u_k^p$ is the double $p$-form 
defined in Proposition~\ref{localparam}.  This definition is independent of both the choice of chart and the choice of lift $\tx$ since the $u_k^p$ are local isometry invariants.

Define
\begin{equation}\label{eq:heatinvM}
a_k^p(\O):=\int_\O\, U_k^p(x) \, dV_\mathcal O(x).
\end{equation}
One easily verifies that the resulting expressions for the $a_k^p(\O)$ are identical to those for manifolds.  E.g.,  $a_0^p({\O}) = \binom{d}{p} \vol ({O})$ and $a_1^p(\O)$ is given by Equation~\eqref{a11} with $M$ replaced by $\O$.

Set
$$I_0^p(t)\coloneqq(4\pi t)^{-d/2}\sum_{k=0}^\infty\,a_k^p(\O) t^k.$$

\item \label{it:invariants-IpNt}  Let $N$ be a stratum of the singular set of $\O$.    For $\gamma\in \isomax(N)$, define $b_k^p(\gamma,\cdot): N\to \R$ as follows:  For $a\in N$, let $(\widetilde{U}, G_U, \pi_U)$ be an orbifold chart about $a$ in $\O$ and let $\tilde{a}$ be a lift of $a$ in $\widetilde{U}$.   Note that $\tilde{a}$ lies in a pre-singular stratum $W$ of $\widetilde{U}$ (see Notation and Remarks~\ref{notarem:isomax}) and $\gamma$ is naturally identified with an element of $\isomax(W)$.     We define $$b_k^p(\gamma,a):=b_k^p(\gamma,\tilde{a}),$$ where the right-hand side is defined as in Theorem~\ref{bkthm} part~\ref{it:bkthm-global}.   The universality of the functions $b_k^p :\mathcal{T}\to \R$ (as stated in Theorem~\ref{bkthm} part~\ref{it:bkthm-global}) implies that  the left-hand side is independent both of the choice of chart and of the choice of lift $\tilde{a}$.   Define $b_k^p(N,\cdot): N\to \R$ by

$$ b_ k^p(N,a):=\sum_{\gamma\in \isomax(N)}\, b_k^p(\gamma,a),$$

and set
\begin{align}
\label{nota_heatb}
b_ k^p(N):=\sum_{\gamma\in \isomax(N)}\,\int_N\, b_k^p(\gamma,a)\,dV_N(a),
\end{align}
where $dV_N$ is the volume element on $N$ for the Riemannian metric on $N$ induced by that of $\O$.

Aside: this notation differs slightly from that in \cite{DGGW08} where the case $p=0$ is studied.

Set
$$I_N^p(t):=(4\pi t)^{-\dim(N)/2}\sum_{k=0}^\infty\,b_k^p(N) t^k.$$
Observe that $I_N^p(t)=0$ if $N$ is not a primary stratum.
\end{enumerate}
\end{nota}

\begin{remark}
Since each stratum $N$ consists of points of a fixed isotropy type, the expression for $b_0^p(\gamma, a)$ in Theorem~\ref{bkthm} part~\ref{it:bkthm-global} is independent of $a\in N$ and we denote it by $b_0^p(\gamma)$. In the notation of Equation~\eqref{nota_heatb}, we have
\begin{equation}\label{niceb0} b_0^p(N)=\vol(N)\sum_{\gamma\in\isomax(N)}\,b_0^p(\gamma).
\end{equation}
\end{remark}

\begin{thm}\label{thm.asympt} Let $\O$ be a closed $d$-dimensional Riemannian orbifold, let $p\in \{1,\dots, d\}$ and let $0\leq\lambda_1\leq\lambda_2\leq\dots \to +\infty$ be the spectrum of the Hodge Laplacian acting on smooth $p$-forms on $\O$.   The heat trace has an asymptotic expansion as $t\to 0^+$ given by
\begin{equation}
\label{eq:traceO}
 \tr K^p(t) \sim_{t\to 0^+}\, I_0^p(t)+\sum_{N\in S(\O)} \, \frac{I_N^p(t)}{|\iso(N)|},
 \end{equation}
where $S(\O)$ is the set of all singular $\O$-strata and where $|\iso(N)|$ is the order of the isotropy group of $N$.    This asymptotic expansion is of the form
\begin{align}
    \label{heatascoeffc}
(4\pi t)^{-d/2}\sum_{j=0}^\infty\, c^p_j(\mathcal O)\,t^{\frac{j}{2}}
\end{align}
with $c^p_j(\mathcal O)\in\R$.
\end{thm}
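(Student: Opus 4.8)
The plan is to feed the local description of the heat kernel from Theorem~\ref{thm.heattrace} into Donnelly's local heat-trace expansion, Theorem~\ref{bkthm}, chart by chart, and then reorganize the outcome according to the singular stratification of $\O$. By Theorem~\ref{thm.heattrace}, $\tr K^{p}(t)\sim\sum_{\alpha=1}^{s}\int_{\O}\etaa(x)\,C_{1,2}\ha(t,x,x)\,dV_{\O}(x)$, and by the orbifold change-of-variables formula each summand equals $\frac{1}{|G_{\alpha}|}\int_{\twa}\tetaa(\tx)\,C_{1,2}\tha(t,\tx,\tx)\,dV_{\twa}(\tx)$. By Notation~\ref{nota.ha}, on the diagonal
\[
C_{1,2}\tha(t,\tx,\tx)=\sum_{\g\in G_{\alpha}}(4\pi t)^{-d/2}e^{-d^{2}(\tx,\g(\tx))/4t}\Big(\sum_{j\ge 0}t^{j}\,C_{1,2}\big(\g^{*}_{\cdot,2}u^{p}_{j}\big)(\tx,\tx)\Big),
\]
so the sum splits into its $\g=\id$ part and its $\g\neq\id$ parts. (One may truncate the inner series at order $m$ as in Notation and Remarks~\ref{notarem:hm}, the tail being $O(t^{m-d/2+1})$; thus all manipulations below take place at the level of genuine asymptotic expansions.)

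For $\g=\id$ the summand equals $(4\pi t)^{-d/2}\sum_{j\ge 0}t^{j}\,U^{p}_{j}(\tx)$, with $U^{p}_{j}$ as defined in Notation~\ref{invariants}; summing over $\alpha$, dividing by $|G_{\alpha}|$, pushing back down to $\O$, and using $\sum_{\alpha}\etaa\equiv 1$ together with $a^{p}_{j}(\O)=\int_{\O}U^{p}_{j}\,dV_{\O}$, these contributions reassemble precisely into $I_{0}^{p}(t)$. For fixed $\alpha$ and fixed $\g\in G_{\alpha}\setminus\{\id\}$, the corresponding summand equals $\frac{1}{|G_{\alpha}|}$ times the integral of Theorem~\ref{bkthm} part~\ref{it:bkthmlocal} for $(\twa,\g,\tetaa)$ — here $\tetaa$ is $\g$-invariant with compact, geodesically convex support (it sits in a small concentric geodesic ball) — so by that theorem it has an asymptotic expansion $\frac{1}{|G_{\alpha}|}\sum_{Q}(4\pi t)^{-\dim Q/2}\sum_{k\ge 0}t^{k}\int_{Q}c^{p}_{k}(\tetaa,\g,a)\,dV_{Q}(a)$ indexed by the components $Q$ of $\Fix(\g)$ in $\twa$. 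Since $\twa$ is a geodesically convex ball, only the component through its center can meet $\supp\tetaa$, so at most one term survives.

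The heart of the proof is to show that $\sum_{\alpha}$, summed over $\g\in G_{\alpha}\setminus\{\id\}$, of these (one-term) expansions equals $\sum_{N\in S(\O)} I^{p}_{N}(t)/|\iso(N)|$; this is the bookkeeping step, and I would carry it out exactly as in the case $p=0$ in \cite{DGGW08}. Three ingredients enter. (i) For a chart $\alpha$ and $\g\neq\id$, the relevant component $Q\subset\Fix(\g)$ maps under $\pi_{\alpha}$, off a measure-zero subset of $Q$, onto an open subset of the unique singular stratum $N$ of $\O$ with $\g\in\isomax(N)$ — here the characterizations of $\isomax$ in Notation and Remarks~\ref{notarem:isomax} are used — and $\dim Q=\dim N$, so the prefactor $(4\pi t)^{-\dim Q/2}$ matches that of $I^{p}_{N}(t)$. (ii) Although $c^{p}_{k}(\tetaa,\g,a)$ a priori depends on the transverse jet of $\tetaa$ at $a$, that dependence collapses upon summing over all charts that see $N$: by linearity of $c^{p}_{k}$ in the cut-off, by its locality and universality, and because every positive-order derivative of $\sum_{\alpha}\etaa\equiv 1$ vanishes, one obtains $\sum_{\alpha}c^{p}_{k}(\tetaa,\g,a)=b^{p}_{k}(\g,a)$, the invariant of Theorem~\ref{bkthm} part~\ref{it:bkthm-global} and of Notation~\ref{invariants} part~\ref{it:invariants-IpNt}. (iii) Balancing the number of sheets of $\pi_{\alpha}|_{Q}$ over $N$ against the weights $|G_{\alpha}|^{-1}$ in these expansions produces the overall factor $|\iso(N)|^{-1}$ and the integral $\int_{N}b^{p}_{k}(\g,a)\,dV_{N}(a)$, and summing over conjugacy-class representatives $\g\in\isomax(N)$ gives $b^{p}_{k}(N)$, hence $I^{p}_{N}(t)/|\iso(N)|$. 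I expect this reorganization — identifying local fixed-point data in the charts with the global stratification while tracking the combinatorial factors — to be the main obstacle.

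Finally, the stated form of the expansion follows by counting powers of $t$. The term $I_{0}^{p}(t)=(4\pi t)^{-d/2}\sum_{k\ge 0}a^{p}_{k}(\O)t^{k}$ contributes only powers $t^{j/2}$ with $j=2k$ even, whereas $I^{p}_{N}(t)/|\iso(N)|=(4\pi t)^{-d/2}\sum_{k\ge 0}(4\pi)^{\codim(N)/2}\,b^{p}_{k}(N)\,t^{(\codim(N)+2k)/2}/|\iso(N)|$ contributes powers $t^{j/2}$ with $j=\codim(N)+2k$, of the same parity as $\codim(N)$ and with $j\ge\codim(N)\ge 1$. Since $\O$ has only finitely many singular strata, for each integer $j\ge 0$ the coefficient of $(4\pi t)^{-d/2}t^{j/2}$ is a finite sum of real numbers; denoting it $c^{p}_{j}(\O)$ gives the asserted expansion.
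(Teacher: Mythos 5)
Your proposal follows essentially the same route as the paper's proof: starting from Theorem~\ref{thm.heattrace}, lifting to charts, separating the identity contribution (which reassembles into $I_0^p(t)$), applying Theorem~\ref{bkthm} part~\ref{it:bkthmlocal} to each nontrivial $\gamma\in G_\alpha$, and then using linearity of $c_k^p$ in the cut-off together with the sheet-counting identity $|S(\widetilde U_\alpha;N)|=|G_\alpha|/|\Iso(N)|$ to produce the factor $|\Iso(N)|^{-1}$ and the invariants $b_k^p(N)$. The bookkeeping step you flag as the main obstacle is carried out in the paper exactly as you outline (mirroring the $p=0$ case of \cite{DGGW08}), so the proposal is correct and not materially different.
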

Observe that if there are no singular strata, then Equation~\eqref{eq:traceO} agrees with Equation~\eqref{manifoldasymp}.

\begin{proof} By Theorem~\ref{thm.heattrace}, we can write
\begin{equation}\label{prelim}
\tr K^p(t)
\sim_{\too}\,\sum_{\a=1}^s\,\int_\O\,\etaa(x)C_{1,2}\ha(t,x,x)\,dx.
\end{equation}
Recalling the notation introduced in Equation~\eqref{nota.hm}, Equation~\eqref{prelim}, we have
\begin{equation}\label{start}
\tr K^p(t) \sim_{\too}\sum_{\a=1}^s\frac{1}{|G_\a|}\sum_{\g\in\ga}L(t,\alpha,\gamma)
\end{equation}
where
\begin{equation}\label{start2}
L(t,\alpha,\gamma)\coloneqq \int_{\tua}(4\pi t)^{-d/2}e^{-d^2(\tx,\g(\tx))/4t}\tetaa(\tx) \notag\,
 C_{1,2}\Big(\g_{.,2}^*u^p_0(\tx,\tx)+ t\g_{.,2}^*u^p_1(\tx,\tx)+\cdots\Big) dV_{\tua}(\tx).
\end{equation}

We will group together various terms in this double sum.   First consider the identity element $1_\a$ of each $G_\a$.   By Notation~\ref{invariants} part~\ref{it:invariants-Ip0t}, we have
$$\sum_{\a=1}^s\frac{1}{|G_\a|}L(t,\a, 1_\a)= (4\pi t)^{-d/2}\int_\O (U_0^p(x) + tU_1^p(x) +\cdots) \,dV_\O(x)   =I_0^p(t).$$

Next let
\begin{equation}\label{I'} I'(t)\coloneqq\sum_{\a=1}^s\frac{1}{|G_\a|}\sum_{1_\a\neq\g\in\ga}L(t,\alpha,\gamma).
  \end{equation}
  It remains to show that
  \begin{equation}\label{toshow}I'(t)=\sum _{N\in S(\O)}\frac{I_N^p(t)}{|\iso(N)|}.\end{equation}
We will apply Theorem~\ref{bklocal} to each term in $I'(t)$.   For each $\a$ and each nontrivial element $\gamma\in G_\a$, Theorem~\ref{bklocal} (with $\tua$ playing the role of $M$ in Theorem~\ref{bklocal} part~\ref{it:bkthmlocal}) expresses $L(t,\a, \gamma)$  as a sum of integrals over the various components of the fixed point set of $\gamma$ in $\tua$.   Each such component is a union of pre-singular strata in $\tua$.  Since the union of those pre-singular strata $W$ in $\tua$ for which $\gamma\in \isomax(W)$ is an open subset of $\Fix(\gamma)$ of full measure, we can instead add up the integrals over such pre-singular strata.  Letting $S(\tua)$ denote the pre-singular strata of $\tua$ and applying Theorem~\ref{bklocal}, we have
\begin{equation}
\label{ip}
I'(t)=\sum_{\a=1}^s \sum_{W\in S(\tua)}\,\sum_{\g\in\isomax(W)}\,\frac{1}{|G_\a|}(4\pi t)^{-\dim(W)/2}\sum_{k=0}^{\infty}\,t^k\int_W\,c_k^p(\tetaa, \gamma, \tilde{a})\, dV_W(\tilde{a}).
\end{equation}

 Let $N\in S(\O)$.
For $a\in N$, set
$$c_k^p(\etaa,\gamma, a):= c_k^p(\tetaa, \gamma, \tilde{a})$$
 where $\tilde{a}$ is any lift of $a$ in $\tua$.   This is well-defined since any two lifts differ by an element of $G_\a$ and $\tetaa$ is $G_\a$-invariant.  Moreover, Theorem~\ref{bklocal} part~\ref{it:bkthmlocal} implies that
\begin{equation}\label{sumck}\sum_{\a=1}^s c_k^p(\etaa, \gamma, a)=b_k^p(\gamma, a).\end{equation}

For each $\alpha$ and each $W\in S(\tua)$, there exists $N\in S(\O)$ such that $\pi_\a$ maps $W$ isometrically onto $N\cap U_\a$.  Let
\begin{equation}\label{san}S(\tua;N)=\{W\in S(\tua): \pi_\a(W)=N\cap U_\a\} \end{equation}
 and observe that
\begin{equation}\label{osan}|S(\tua; N)|= \frac{|G_\a|}{|\Iso(N)|}.\end{equation}
For $W\in S(\tua;N)$, we identify $\isomax(N)$ with $\isomax(W)$.   Observe that
\begin{equation}\label{WN}\int_W\,c_k^p(\tetaa, \gamma, \tilde{a})\, dV_W(\tilde{a}) =\int _{N\cap U_\a}\,c_k^p(\etaa,\gamma, a)\, dV_N(a).\end{equation}
Since $\dim(W)=\dim(N)$, Equations~\eqref{ip}, \eqref{sumck}, \eqref{san}, \eqref{osan} and \eqref{WN}  yield
\begin{align*}
I'(t)&=\sum_{N\in S(\O)}\sum_{\a=1}^s \frac{|G_\a|}{|\Iso(N)|}\sum_{\g\in\isomax(N)}\,\frac{1}{|G_\a|}(4\pi t)^{-\dim(N)/2}\sum_{k=0}^\infty\,t^k\int _{N\cap U_\a}\,c_k^p(\etaa,\gamma, a) dV_N(a)\\
&=\sum_{N\in S(\O)}\frac{1}{|\Iso(N)|}\sum_{\g\in\isomax(N)}\,(4\pi t)^{-\dim(N)/2}\sum_{k=0}^\infty\,t^k\int_N\,b_k^p(\gamma, a) \, dV_N(a)=\sum_{N\in S(\O)} \, \frac{I_N^p(t)}{|\iso(N)|}.
\end{align*}
  The theorem follows.
\end{proof}

\begin{remark}\label{abs boun}
Let $\O$ be a closed Riemannian orbifold all of whose singular strata $N$ have co-dimension one; i.e., the singular strata are mirrors.   Every such stratum is necessarily totally geodesic in $\O$.  (Indeed, about any point $a$ in $N$, there exists a coordinate chart $(\tu, G_U,\pi_U)$ where $G_U$ is generated by a reflection $\tau$, and the Riemannian metric on $U=\pi(\tu)$ arises from a $\tau$-invariant metric on $\tu$.    The fixed point set of any involutive isometry is necessarily totally geodesic.)   The underlying topological space $\underline{\O}$ has the structure of a smooth Riemannian manifold each of whose boundary components is totally geodesic.

The orbifold $\O$ is good in this case, as we may double $\O$ over its reflectors
to obtain a smooth Riemannian manifold $M$ admitting a reflection symmetry $\tau$ so that $\O=\langle\tau\rangle\backslash M$.   Let $P:M\to \underline{\O}$ be the projection.  Identify the fixed point set of $\tau$ with $N$.  Let $\omega$ be a smooth $p$-form on the orbifold $\O$.  Then $\widetilde{\omega}:=P^*\omega$ is $\tau$ invariant.   Let $\nu$ be a unit normal vector field along $N$ in $M$ and let $j:N\to M$ be the inclusion.   The $\tau$-invariance of $\widetilde{\omega}$ is equivalent to the conditions $j^*\iota_\nu\widetilde{\omega}=j^*\iota_\nu d\widetilde{\omega}=0$.   Using the same notation $\nu$ for the unit normal $P_*\nu$ along $N$ in $\O$ and now letting $j:N\to \O$ be the inclusion, this gives
\[
j^*\iota_\nu \omega=j^*\iota_\nu d\omega=0,
\]
and these are precisely the absolute boundary conditions for the manifold $\underline{\O}$.  The spectrum of the Hodge Laplacian for $p$-forms on $\O$ thus coincides with the spectrum of the Hodge Laplacian for $p$-forms on $\underline{\O}$ with absolute boundary conditions. It is then straightforward to prove that the spectral invariants $b^p_k(N)$ computed here for the orbifold agree with the familiar contributions to the heat trace asymptotics arising from the boundary of $\underline{\O}$ as obtained in \cite[(2) Theorem 3.2]{P03} (see also \cite[Theorem 1.2]{BG90}).

\end{remark}

\section{Proof of Theorem~\ref{thm:main1V4}}\label{applications}

In this section we use the asymptotic results derived in the previous section to prove our main theorem.
For various types of singular strata $N$ in a $d$-dimensional closed orbifold $\orb$, we first compute the invariant $b_0^p(\gamma)$ for $\gamma \in \isomax(N)$.


\begin{prop}\label{b01general}  
Let $N$ be a singular stratum of codimension $k$ in the $d$-dimensional closed orbifold $\O$. Suppose $\gamma \in\isomax(N)$ has eigenvalue type $E(\theta_1, \theta_2, \dots, \theta_s;r)$, using Notation~\ref{nota:Rstuff}.

Then
\begin{equation}\label{eq.b01}b_0^1(\gamma)=\left( d-k-r+\sum_{j=1}^s\,2\cos(\theta_j)\right) \left( 2^{-k}\prod_{j=1}^s\,\csc^2(\theta_j/2)\right) .\end{equation}
Here we use the convention that when $s=0$, we have $\prod_{j=1}^s\,\csc^2(\theta_j/2)=1$.
\end{prop}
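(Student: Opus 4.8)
The plan is to apply the explicit formula for the leading heat invariant recorded in Theorem~\ref{bkthm} part~\ref{it:bkthm-global}, namely $b_0^p(\gamma,a)=\tr_p(d\gamma_a)\big/\lvert\det(\Id_{\codim(Q)}-A_a)\rvert$, specialize it to $p=1$, and carry out the resulting finite-dimensional linear algebra using the eigenvalue data encoded in the eigenvalue type $E(\theta_1,\dots,\theta_s;r)$. Since $N$ consists of points of a single isotropy type, $b_0^1(\gamma,a)$ is independent of $a\in N$ (as noted after Notation~\ref{invariants}), so it suffices to compute at a single lift, where $d\gamma_a$ is identified with $\gamma\in O(d)$ as in Notation~\ref{nota:don}.

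First I would record the eigenvalue bookkeeping. By Notation~\ref{nota:Rstuff}, $\gamma$ has eigenvalues $e^{\pm i\theta_j}$ for $1\le j\le s$, the eigenvalue $-1$ with multiplicity $r$, and the eigenvalue $1$ with multiplicity $d-2s-r$. Because $\gamma\in\isomax(N)$, the last item of Notation and Remarks~\ref{notarem:isomax} gives $\dim N=d-2s-r$, hence $k=\codim N=2s+r$. Consequently $T_aQ$ is the $(+1)$-eigenspace and the normal space $(T_aQ)^\perp$ is the span of the $e^{\pm i\theta_j}$-eigenvectors together with the $(-1)$-eigenspace, so $A_a=d\gamma_a\restr{(T_aQ)^\perp}$ is the orthogonal operator on a $k$-dimensional space with eigenvalues $e^{\pm i\theta_j}$ ($1\le j\le s$) and $-1$ (multiplicity $r$). (The hypothesis $\theta_j\in(0,\pi)$ ensures $e^{i\theta_j}\neq 1$, so $\det(\Id_k-A_a)\neq0$, consistent with Notation~\ref{nota:don}, and also that $\csc(\theta_j/2)$ is defined.)

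Next I would compute the two factors. For $p=1$ the action of $\gamma$ on $\wedge^1(\R^d)=\R^d$ is the standard one, so $\tr_1(d\gamma_a)=\tr(\gamma)=\sum_{j=1}^s 2\cos\theta_j+r(-1)+(d-2s-r)$, which equals $\big(d-k-r\big)+\sum_{j=1}^s 2\cos\theta_j$ since $k=2s+r$. For the denominator,
\[
\det(\Id_k-A_a)=2^r\prod_{j=1}^s(1-e^{i\theta_j})(1-e^{-i\theta_j})=2^r\prod_{j=1}^s\lvert 1-e^{i\theta_j}\rvert^2,
\]
and applying the half-angle identity $\lvert 1-e^{i\theta}\rvert^2=2-2\cos\theta=4\sin^2(\theta/2)$ gives $\det(\Id_k-A_a)=2^r\cdot 4^s\prod_{j=1}^s\sin^2(\theta_j/2)=2^{k}\prod_{j=1}^s\sin^2(\theta_j/2)>0$, so the absolute value is harmless. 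Dividing yields exactly Equation~\eqref{eq.b01}, with the convention $\prod_{j=1}^0=1$ covering the case $s=0$.

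There is no real obstacle here: the proof is essentially a one-line substitution into Theorem~\ref{bkthm} followed by the half-angle identity. The only point requiring a little care is the bookkeeping that $\codim Q=k=2s+r$ and that $A_a$ collects precisely the non-unit eigenvalues of $\gamma$ — it is this that lets the factors $2^r$ (from the $-1$ eigenvalues) and $4^s$ (from the rotation blocks) combine into the clean power $2^k$ appearing in the statement.
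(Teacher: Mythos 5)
Your proof is correct and follows the same route as the paper: substitute into the formula $b_0^p(\gamma,a)=\tr_p(d\gamma_a)/\lvert\det(\Id_{\codim(Q)}-A_a)\rvert$ from Theorem~\ref{bkthm}, identify the first bracket as $\tr(\gamma)$ and the second as the reciprocal of the determinant. The paper states these two identifications without writing out the eigenvalue bookkeeping and the half-angle computation, which you supply explicitly and correctly (including the combination $2^r\cdot 4^s=2^{k}$).
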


\begin{proof} We apply Theorem~\ref{bkthm}.
The expression in the first set of large parentheses in Equation~(\ref{eq.b01}) is the trace of $\gamma$.    The expression in the second set of large parentheses equals $\displaystyle\frac{1}{|\det(\Id_{\codim(N)}-A)|}$, where (taking note of Notation~\ref{invariants} and Notation and Remarks \ref{nota:don}), we are writing $A$ to denote the expression $A_a$ in Theorem~\ref{bkthm} part~\ref{it:bkthm-global}.
\end{proof}

In preparation for the proof of Theorem \ref{thm:main1V4}, we consider primary singular strata of codimension two with cyclic isotropy groups.

\begin{prop}
\label{b01codim2}
Let $N$ be a stratum of codimension two with cyclic isotropy group of order $m \geq 2$.  Then,
\begin{equation}\label{eq.codim2} 
b_0^1(N)= \left( (d-2)\frac{m^2 - 1}{12} + \frac{m^2 - 6m + 5}{6}\right) \vol(N).
\end{equation}
\end{prop}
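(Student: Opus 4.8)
The plan is to apply Proposition~\ref{b01general} to each nontrivial element of the isotropy group of $N$ and then sum the contributions. Write $\Iso(N)=\langle\gamma\rangle$ with $\gamma$ of order $m$. Since $N$ has codimension $k=2$, the isotropy acts on the $2$-dimensional normal space to $N$, and the generator acts there by a rotation through $2\pi/m$; hence for $1\le j\le m-1$ the power $\gamma^j$ has eigenvalue $1$ on $\R^d$ with multiplicity exactly $d-2=\dim N$, the two normal eigenvalues being $e^{\pm 2\pi i j/m}\ne 1$. By the final observation in Notation and Remarks~\ref{notarem:isomax}, every nontrivial power of $\gamma$ therefore lies in $\isomax(N)$, so $\isomax(N)=\{\gamma^j:1\le j\le m-1\}$, with no repetitions and no omissions.

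Next I would apply Proposition~\ref{b01general} to $\gamma^j$, which has eigenvalue type $E(2\pi j/m;0)$ in the notation of Notation~\ref{nota:Rstuff} (for the value $j=m/2$ arising when $m$ is even, the rotation by $\pi$ is covered by the convention discussed in the remark following Notation~\ref{nota:Rstuff}, and the same closed formula still applies). This yields
\[
b_0^1(\gamma^j)=\Bigl(d-2+2\cos\tfrac{2\pi j}{m}\Bigr)\,\tfrac14\csc^2\tfrac{\pi j}{m},
\]
and since $b_0$ is independent of the base point in $N$, $b_0^1(N)=\vol(N)\sum_{j=1}^{m-1}b_0^1(\gamma^j)$. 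Using the identity $\cos\frac{2\pi j}{m}=1-2\sin^2\frac{\pi j}{m}$ to rewrite $d-2+2\cos\frac{2\pi j}{m}=d-4\sin^2\frac{\pi j}{m}$, the sum telescopes into
\[
\sum_{j=1}^{m-1}b_0^1(\gamma^j)=\frac14\Bigl(d\sum_{j=1}^{m-1}\csc^2\tfrac{\pi j}{m}-4(m-1)\Bigr).
\]

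The one non-routine ingredient is the classical evaluation $\sum_{j=1}^{m-1}\csc^2(\pi j/m)=\frac{m^2-1}{3}$, which I would obtain either from the partial-fraction expansion of $\pi^2/\sin^2(\pi z)$ or by comparing the second elementary symmetric function of the roots in $1+z+\dots+z^{m-1}=\prod_{j=1}^{m-1}(z-e^{2\pi i j/m})$. Granting it, the sum becomes $\frac14\bigl(\frac{d(m^2-1)}{3}-4(m-1)\bigr)=\frac{d(m^2-1)}{12}-(m-1)$, and an elementary rearrangement verifies that this equals $(d-2)\frac{m^2-1}{12}+\frac{m^2-6m+5}{6}$, which is the asserted value of $b_0^1(N)/\vol(N)$. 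So the only thing requiring genuine care is the bookkeeping of $\isomax(N)$ in the first step; after that the argument is the cosecant-sum identity together with elementary algebra, and as a consistency check the case $m=2$ recovers $\frac14(d-4)\vol(N)=\frac{\vol(N)}{2^2}K_1^d(2)\,\vol(N)$ from Example~\ref{isotropy2}.
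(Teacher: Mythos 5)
Your proof is correct, and its skeleton is the same as the paper's: identify $\isomax(N)$ with the $m-1$ nontrivial powers of a generator acting as a rotation of order $m$ on the normal plane, apply Proposition~\ref{b01general} to each, and sum. The difference lies in how the sum is evaluated, and your route is genuinely more economical. The paper computes
\[
\sum_{j=1}^{m-1}\Bigl(d-2+2\cos\tfrac{2\pi j}{m}\Bigr)\tfrac14\csc^2\tfrac{\pi j}{m}
\]
by splitting it into the two sums of Lemma~\ref{lem:trigsum2}, the second of which, $\sum_{j=1}^{m-1}\cos(2\pi j/m)\csc^2(\pi j/m)=\frac{m^2-6m+5}{3}$, it evaluates via an identity for higher-order Bernoulli polynomials cited from the literature. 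Your substitution $\cos\frac{2\pi j}{m}=1-2\sin^2\frac{\pi j}{m}$ collapses each summand to $\frac{d}{4}\csc^2\frac{\pi j}{m}-1$, so only the classical identity $\sum_{j=1}^{m-1}\csc^2(\pi j/m)=\frac{m^2-1}{3}$ is needed; in effect you show that the second identity in Lemma~\ref{lem:trigsum2} is an immediate corollary of the first, making the Bernoulli-polynomial machinery unnecessary for this proposition. You are also slightly more careful than the paper on two points: you justify why every nontrivial power of the generator lies in $\isomax(N)$ (via the eigenvalue-multiplicity criterion), and you treat $m=2$ and the $j=m/2$ rotation by $\pi$ uniformly rather than deferring to Example~\ref{isotropy2}. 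The only blemish is a typo in your final consistency check, where $\vol(N)$ appears twice; the intended identity $\frac{d-4}{4}\vol(N)=\frac{\vol(N)}{2^2}K_1^d(2)$ is correct.
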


\begin{proof} If $m=2$, then the generator $\gamma$ of $\Iso(N)$ must have eigenvalue type $E(;2)$.
If $m \geq 3$, $\gn$ is generated by an element $\gamma$ with eigenvalue type $E(2\pi/m;)$. 
All elements of $\gn$ other than the identity lie in $\isomax(N)$. The proposition then follows from Notation~\ref{invariants} part~\ref{it:invariants-IpNt}, Proposition~\ref{b01general}, and the following two formulas:
\[\sum_{j=1}^{m -1} \csc^2(\pi j/ m)
  = \frac{m^2 - 1}{3}
\ \ \ \
\text{and} \ \ \ \
\sum_{j=1}^{m -1} \cos (2\pi j/ m)\csc^2 (\pi j/ m)
  = \frac{m^2 - 6m + 5}{3},
\]
assuming here that $2\leq m \in \mathbb{Z}$. These formulas are proved using the calculus of residues.   See \cite[Lemma 5.4]{DGGW08} for the first formula.
For the second, by \cite[Equation (2.3)]{CS12}, we have
  \begin{align*}
    S_3(m,1,1) &= \sum_{j=1}^{m-1} \cos (2\pi j/ m) \csc^2 (\pi j/ m) \\
    &= 2 \sum_{\alpha =0}^{1} \binom{2}{2\alpha} B_{2\alpha}(1/m) B_{2-2\alpha}^{(2)}(1) m^{2\alpha} \\
    &= 2B_0(1/m)B_2^{(2)}(1) + 2B_2(1/m)B_0^{(2)}(1)m^2
    = \frac{m^2 - 6m + 5}{3},
  \end{align*}
where $B_{n}(x)$ are the ordinary Bernoulli polynomials and $B_{n}^{(m)}(x)$ are the higher-order Bernoulli polynomials in $x$ (see \cite{CS12} and references therein).

\end{proof}

\begin{ex}\label{compound rotation} Although not required for our main result, one can generalize Proposition~\ref{b01codim2} to compute $b_0^1(N)$ for strata for which $\gn$ is cyclic and
\begin{itemize}
\item when $m>2$, the generator has eigenvalue type $E(2\pi/m, \dots, 2\pi/m;)$ with $2\pi/m$ repeated $s$ times, or
\item when $m=2$, the generator has eigenvalue type $E(;r)$ with $r$ even.
\end{itemize}

Consider the case that $N$ has codimension $4$ in $\O$. By \cite[Equation~(5.2) and Corollary~5.4]{BY02}, we have
\begin{equation}\label{trigsum4}
    \sum_{j=1}^{m-1} \cos (2\pi j/ m) \csc^4(\pi j/ m) = \frac{m^4 -20m^2 +19}{45}.
\end{equation}

From \cite[Equation~(5.2) and Corollary~5.2]{BY02}, we have that
\begin{equation}\label{trigsum4'}
    \sum_{j=1}^{m -1} \frac{d-4}{\sin^4(\pi j/ m)}
    = \frac{(d-4) (m^4 +10m^2 -11)}{45}.
\end{equation}

Thus by Proposition~\ref{b01general},
\begin{equation*}
    b_0^1(N) =
   \vol(N) \frac{4(m^4 -20m^2 +19)+(d-4) (m^4 +10m^2 -11)}{720}.
\end{equation*}

Reference \cite{BY02} also contains formulas analogous to Equations~(\ref{trigsum4}) and (\ref{trigsum4'}) with the power $4$ replaced by the power $2s$ for arbitrary positive $s$, thus yielding expressions for $b_0^1(N)$ for rotation strata of constant rotation angle and higher codimension.

\end{ex}


\begin{proof}[Proof of Theorem~\ref{thm:main1V4}]
If the singular set of $\O$ has odd codimension, then Theorem~\ref{thm:dggw5.1} and the fact that strata of minumum codimension are primary imply that the 0-spectrum alone suffices to distinguish $\O$ from any Riemannian  manifold.   Thus we may assume that the singular set has codimension two.   Let $S_2(\O)$ denote the set of all strata of codimension two.

Let $M$ be a closed Riemannian manifold of dimension $d$.   In the notation of Equation~\eqref{heatascoeffc}, we have
\begin{equation}\label{c2}c_2^p(M)=a_1^p(M)\,\,\,\mbox{while}\,\,\,c_2^p(\O)= a_1^p(\O) +\sum_{N\in S_2(\O)}\,\frac{4\pi}{|\iiso(N)|}b_0^p(N).\end{equation}
For $* =M$ or $\O$, we have
\begin{equation}\label{eqc} a_1^1(*)=\frac{d-6}{6} \int_{*} \tau \, dV_*=(d-6)a_1^0(*).\end{equation}
We will show that
\begin{equation}\label{greater d-6}
b_0^1(N)>(d-6)b_0^0(N)
\end{equation}
 for every $N\in S_2(\O)$.    Theorem~\ref{thm:main1V4} will then follow from Theorem~\ref{thm.asympt} and Equations~\eqref{c2}, \eqref{eqc} and \eqref{greater d-6}.

 For $N\in S_2(\O)$, we have $\iso(N)\simeq A\times  I_{d-2}$ where $A\subset O(2)$ and $I_{d-2}$ is the $(d-2)\times (d-2)$ identity matrix.  Since $\O$ contains no strata of codimension one, $A$ cannot contain any reflections and thus $N$ has cyclic isotropy group of order $m$ contained in $SO(2)\times  I_{d-2}$.
The computation of $b_0^0(N)$ is analogous to the case of cone points in dimension 2, carried out in \cite[Proposition 5.5]{DGGW08}, yielding
$$b_ 0^0(N)=\frac{(m^2-1)}{12} \vol(N).$$
Using Proposition~\ref{b01codim2}, we have
$$
b_0^1(N)=\left((d-2)\frac{m^2-1}{12}+\frac{m^2-6m+5}{6}\right)\vol(N).
$$
Thus, noting that $m\geq 2$, we have
$$b_0^1(N)-(d-6)b_0^0(N)=\frac{3m^2 - 6m + 3}{6}>0.$$
This proves Equation~(\ref{greater d-6}) and the theorem follows.
\end{proof}

\begin{remark}\label{rem:orbisurfaces} It is shown in \cite[Theorem 5.15]{DGGW08} that the $0$-spectrum alone suffices to distinguish singular, closed, locally orientable Riemannian orbisurfaces with nonnegative Euler characteristic from smooth, oriented, closed Riemannian surfaces. In fact, it is shown there that the degree zero term in the small-time heat trace asymptotics for functions gives rise to a complete topological invariant for orbisurfaces satisfying these constraints. In Theorem~\ref{thm:main1V4}, by also appealing to the $1$-spectrum, we do not require local orientability or nonnegative Euler characteristic to distinguish Riemannian orbisurfaces from closed Riemannian surfaces.
\end{remark}

\section{Appendix}
\label{app}

We outline the proof of Theorem~\ref{bkthm}.    We are closely following the proof of \cite[Theorem 4.1]{D76} but expressing it in a more general context.

Let $M$ be an arbitrary Riemannian manifold and $\gamma$ an isometry of $M$. Let $d(x,y)$ denote the induced distance function between $x$ and $y$ in $M$.  Outside of a small tubular neighborhood of $\Fix(\gamma)$, we have that  $(4\pi t) ^{-d/2}e^{-\frac{ d^2(x, \gamma(x))}{4t}}\to 0$ uniformly as $t\to 0^+$.  Thus for any compactly supported smooth function $f$ on $M$, we have
\begin{align*}
&\int_{M}  \,(4\pi t) ^{-d/2}e^{-\frac{ d^2(x, \gamma(x))}{4t}}f(x) \,dV_{ M}(x)
\\
&\qquad = \sum_{Q\in \mathcal{C}(\Fix(\gamma))} \int_{U_Q} \,(4\pi t) ^{-d/2}e^{-\frac{ d^2(x, \gamma(x))}{4t}}f(x) \,dV_{ M}(x)
+ O(e^{-c/t}),
\end{align*}
where $U_Q$ is a small tubular neighborhood of $Q$, say of radius $r$, and $c>0$ is a constant, where, as in Notation and Remarks \ref{nota:don} part~\ref{it:don-CFix}, $\mathcal C(\Fix(\gamma))$ denotes the set of connected components of $\Fix(\gamma).$

We will later specialize to the choices of $f$ that arise in Theorem~\ref{bkthm}, but for now we work in the general setting.

As in \cite[Theorem 4.1]{D76}, we analyze each term in the sum above individually and define
\begin{equation*}
I_Q(f):= \int_{U_Q} \,(4\pi t) ^{-d/2}e^{-\frac{ d^2(x, \gamma(x))}{4t}}f(x) \,dV_{ M}(x).
\end{equation*}

 Let $W_Q\subset TM$ be the normal disk bundle of $Q$ of radius $r$ and let $\varphi: W_Q\to Q$ be the bundle projection.   We use the diffeomorphism $W_Q\to U_Q$ given by $\x\mapsto\exp_{\varphi(\x)}\x$, where $\exp$ is the Riemannian exponential map of $M$, to express $I_Q(f)$ as
$$
I_Q(f)= \int_Q\int_{\varphi^{-1}(a)} (4\pi t) ^{-d/2} e^{-\frac{ d^2(x, \gamma(x))}{4t}}f(x)\,\psi(\x) \,d\x \,dV_{Q}(a).
$$

Here $d\x$ is the Euclidean volume element defined by the Riemannian structure on $(T_aQ)^\perp$, and for $\x\in \varphi^{-1}(a)$, we are writing $x=\exp_a(\x)$.    The function $\psi$ arising in this change of variables depends only on the curvature and its covariant derivatives and, as shown in \cite[Equation~(2.6)]{D76}, satisfies
\begin{equation}\label{psi}\psi(\x)=1-\frac{1}{2}R_{i\alpha j\alpha}\x^i\x^j-\frac{1}{6}R_{ikjk}\x^i\x^j+O(\x^3), \end{equation}
where the $\x^i$ are the coordinates of $\x$ with respect to an orthonormal basis of $(T_{a}Q)^\perp$ and where the components of the curvature tensor are evaluated at the point $a$.

Letting $\bar{\x}=\x-d\gamma_a(\x)$, Donnelly shows that $d^2(x, \gamma(x))= (\bar{\x})^2 +O(\bar{\x})^3$ and then applies a version of the Morse Lemma, see \cite[Lemma A.1]{D76}, to find new coordinates $\y$ so that
\begin{equation*}
d^2(x, \gamma(x))=\sum_{i=1}^s\y^2_i=\|\y\|^2,
\end{equation*}
Making the two changes of variables $\x\to \bar{\x}\to \y$ on $(T_aQ)^\perp$, we have
$$d\x =|\det(B(a))|\,d\bar{\x}=|\det(B(a))||J(\bar{\x},\y)|d\y,$$
where
\begin{equation}\label{B} B(a)= (\Id_{\codim(Q)}-A_a)^{-1}.\end{equation}
 Here $A_a$ is the restriction of $d\gamma_a$ to $(T_aQ)^\perp$ and $|J(\bar{\x},\y)|$ denotes the absolute value of the Jacobian determinant for the second change of variables.
 We are viewing $x=\exp(\x)$, $\x$ and $\bar{\x}$ as functions of the new variable $\y$.    Donnelly shows that $|J(\bar{\x},\y)|$ has a Taylor expansion in $\y$ whose coefficients depend only on the values at $a$ of $B$ and of the curvature tensor $R$ of $M$ and its covariant derivatives:
 \begin{equation}\label{J}
 |J(\bar{\x},\y)|=1+\frac{1}{6}(R_{ikih}B_{ks}B_{ht} +R_{iksh}B_{ki}B_{ht}+R_{ikth}B_{ks}B_{hi})\y^s\y^t +O(\y^3).
 \end{equation}

Letting $H_a(f):(T_aQ)^\perp\to \R$ be given by
\[
H_a(f)(\y)\coloneqq f(x) |\det(B(a))|\,| J(\overline{\x}, \y)| \,\psi(\x)
\]

we have
\begin{equation}\label{iqf} I_Q(f)=\int_Q\,\int_{\varphi^{-1}(a)}\,(4\pi t)^{-d/2}e^{-\|\y\|^2/4t}\,H_a(f)(\y)\,d\y\,dV_{Q}(a).\end{equation}

We expand $H_a(f)$ into its Maclaurin series.  Because of symmetry, only the terms of even degree contribute to the integral $I_Q(f)$.   Thus denoting by $[H_a(f)]_j(\y)$ the homogeneous component of degree $2j$ in the Maclaurin series of $H_a(f)$ and then making the change of variable $\z=\frac{\y}{\sqrt{t}}$, we have
\begin{multline*}\label{expand} I_Q(f)=(4\pi t)^{-\dim(Q)/2}\int_Q\,\int_{\R^{\codim(Q)}}\,(4\pi )^{-(\codim(Q))/2}e^{-\|\z\|^2/4}\sum_{k=0}^L\,t^k [H_a(f)]_k(\z)\,d\z\,dV_Q(a) \\+O(t^{L+1}).
\end{multline*}
(Here we are using the fact that $\varphi^{-1}(a)$ is a disk of radius $\frac{r}{\sqrt{t}}$ in $\z$, which can be replaced by $\R^{\codim(Q)}$ without changing the asymptotics.)

Proceeding exactly as in the proof of \cite[Theorem 4.1]{D76}, we complete the integration by doing an iterated integral over the $\z$ variables, noting that odd powers of any of the coordinates $\z_i$ lead to integrals of the form $\int_\mathbb{R}\,\z_i^{2k+1}e^{-(\z_i^2)/4}\,d\z_i$ which are equal to zero, while the contributions from the terms of the form $\int_\mathbb{R}\z_i^{2m}e^{-(\z_i^2)/4}\,d\z_i$ can be computed by using the classical formula
\begin{equation*}
\int_{\mathbb R}x^{2m}e^{-x^2}\,dx=\frac{1\cdot3\cdot5\cdots (2m-1)}{2^{m+1}}\sqrt{\pi}.
\end{equation*}
We are left with
\begin{equation*}
I_Q(f)=(4\pi t)^{-\dim(Q)/2}\sum_{k=0}^L\frac{t^k}{k!}\int_Q\, \Box^k_{\y}(H_a(f))(0)\,dV_Q(a) +O(t^{L+1}),
\end{equation*}
where, following the notation of Donnelly in \cite[Theorem 4.1]{D76}, we denote
\[
\Box_\y:=\sum_{i=1}^s\frac{\partial^2}{\partial \y_i^2}.
\]

Thus we conclude:

\begin{prop}\label{f}  For any compactly supported smooth function $f$ on $M$, we have
\begin{multline*}
\int_{M}  \,(4\pi t) ^{-d/2}e^{-\frac{ d^2(x, \gamma(x))}{4t}}f(x) \,dV_{ M}(x)
\\
= \sum_{Q\in \mathcal{C}(\Fix(\gamma))} (4\pi t)^{-\dim(Q)/2}\sum_{k=0}^L\frac{t^k}{k!}\int_Q\, \Box^k_{\y}\left(H_a(f)\right)(0)\,dV_Q(a) +O(t^{L+1})\end{multline*}
where $H_a(f)=\frac{f(x)\,| J(\overline{\x}, \y)| \,\psi(\x)}{|\det(\Id_{\codim(Q)}-A_a)|}$ with $\psi(x)$ and $| J(\overline{\x}, \y)|$ given as in Equations~\eqref{psi} and \eqref{J} and with $A_a$ being the restriction of $d\gamma_a$ to $(T_aQ)^\perp$.
\end{prop}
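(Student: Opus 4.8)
The plan is to follow Donnelly's argument in \cite[Theorem 4.1]{D76} verbatim, simply carrying the arbitrary compactly supported $f$ through his integrand and keeping track of the remainder to order $t^{L+1}$. The first step is to localize near the fixed point set: since $d(x,\gamma(x))$ is bounded below by a positive constant outside any fixed tubular neighborhood of $\Fix(\gamma)$, the factor $(4\pi t)^{-d/2}e^{-d^2(x,\gamma(x))/4t}$ is $O(e^{-c/t})$ there, so up to exponentially small error the integral splits as $\sum_{Q}I_Q(f)$, where $I_Q(f)$ is the integral over a tubular neighborhood $U_Q$ of a component $Q\in\mathcal{C}(\Fix(\gamma))$; it then suffices to produce the claimed expansion of each $I_Q(f)$ and sum.

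Next I would introduce Fermi-type coordinates. Using the diffeomorphism from the normal disk bundle $W_Q$ to $U_Q$ given by $\x\mapsto\exp_{\varphi(\x)}\x$, write $I_Q(f)$ as an iterated integral over $Q$ and over the normal fibers $(T_aQ)^\perp$, picking up the change-of-variables density $\psi(\x)$ whose Taylor expansion \eqref{psi} involves only the curvature of $M$ at $a$. Then perform two further fiberwise changes of variable: first $\x\mapsto\bar{\x}:=\x-d\gamma_a(\x)$, contributing the constant Jacobian $|\det(\Id_{\codim(Q)}-A_a)|$ (equivalently $|\det B(a)|^{-1}$ with $B(a)=(\Id_{\codim(Q)}-A_a)^{-1}$); and second the Morse Lemma \cite[Lemma A.1]{D76} applied to the smooth function $\x\mapsto d^2(\exp_a\x,\gamma(\exp_a\x))$, which, using $d^2(x,\gamma(x))=\|\bar{\x}\|^2+O(\|\bar{\x}\|^3)$, produces coordinates $\y$ with $d^2(x,\gamma(x))=\|\y\|^2$ and contributes the Jacobian $|J(\bar{\x},\y)|$ with the expansion \eqref{J}. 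Folding $f$ together with all three densities into $H_a(f)(\y)$ gives $I_Q(f)=\int_Q\int_{\varphi^{-1}(a)}(4\pi t)^{-d/2}e^{-\|\y\|^2/4t}H_a(f)(\y)\,d\y\,dV_Q(a)$, with $H_a(f)$ exactly the expression displayed in the proposition.

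The final step is the Gaussian integration. Expand $H_a(f)$ in its Maclaurin series in $\y$, rescale $\z=\y/\sqrt{t}$, and replace the rescaled disk $\varphi^{-1}(a)$ (radius $r/\sqrt{t}$) by all of $\R^{\codim(Q)}$ at the cost of an $O(e^{-c/t})$ error. By parity only the even-degree homogeneous components survive, and evaluating the resulting one-dimensional moments via $\int_{\mathbb R}x^{2m}e^{-x^2}\,dx=\frac{1\cdot 3\cdots(2m-1)}{2^{m+1}}\sqrt{\pi}$ and collecting powers of $t$ produces precisely $(4\pi t)^{-\dim(Q)/2}\sum_{k=0}^L\frac{t^k}{k!}\int_Q\Box_\y^k(H_a(f))(0)\,dV_Q(a)+O(t^{L+1})$, the recognition of the iterated Gaussian moment as $\frac{1}{k!}\Box_\y^k(\cdot)(0)$ being the same combinatorial identity used in \cite[Theorem 4.1]{D76}. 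Summing over $Q$ yields the proposition.

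The main obstacle is bookkeeping rather than conceptual: one must verify that the Taylor expansions of $\psi$, of $|J(\bar{\x},\y)|$, and of the Morse coordinates $\y$ depend smoothly on the base point $a\in Q$, so that the remainders are uniform in $a$ and survive integration over the compact set $Q$, and that truncating the Maclaurin series of $H_a(f)$ at degree $2L$ really yields an $O(t^{L+1})$ error after the $\z=\y/\sqrt{t}$ rescaling. This uniformity is exactly what is established in \cite[Theorem 4.1]{D76}; everything else is a direct transcription of that computation with Donnelly's integrand replaced by the arbitrary compactly supported $f$.
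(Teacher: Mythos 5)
Your proposal is correct and follows essentially the same route as the paper's appendix: localization to tubular neighborhoods of the components of $\Fix(\gamma)$, Fermi coordinates via the normal exponential map contributing $\psi$, the two fiberwise changes of variable contributing $|\det B(a)|$ and $|J(\bar{\x},\y)|$, and the rescaled Gaussian integration of the Maclaurin series yielding the $\frac{1}{k!}\Box_\y^k$ coefficients. The uniformity-in-$a$ concern you flag at the end is likewise handled in the paper by deferring to Donnelly's original argument, so there is nothing to add.
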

\medskip

To complete the proof of Theorem~\ref{bkthm} part~\ref{it:bkthm-global}, write
\begin{equation}\label{fjp}f_j^p(x)=C_{1,2} \gamma^*_{\cdot, 2} u^p_j(x,x).\end{equation}
Then

\begin{multline*}\int_{M} C_{1,2} \gamma^*_{\cdot, 2} K^p(t,x,x) dV_{M}(x) = \sum_{j=0}^L\, t^j\int_{M}\,(4\pi t) ^{-d/2} e^{-\frac{ d^2(x, \gamma(x))}{4t}}f_j^p(x)dV_{M}(x)+O(t^{L+1})\\
= \sum_{Q\in \mathcal{C}(\Fix(\gamma))}\,(4\pi t)^{-\dim(Q)/2}\sum_{k=0}^{L} t^k\,\int_{Q} b^p_k(\gamma,a) dV_Q (a) +O(t^{L+1}),
\end{multline*}
where
$$b^p_k(\gamma,a)=\sum_{j=0}^k\,\frac{1}{j!}\Box^j_{\y}\left(H_a(f^p_{k-j})\right)(0)$$
with $f^p_{k-j}$ defined as in Equation~\eqref{fjp}.

In particular, when $k=0$, we have
\begin{equation}\label{bop}
b^p_0(\gamma,a)= H_a(f_0^p)(0)=\frac{C_{1,2}\gamma^*_{\cdot, 2} u^p_0(a,a)}{\lvert \det(\Id_{\codim(Q)}-A_a)\rvert} = \frac{\tr_p(d\gamma_a)}{\lvert \det(\Id_{\codim(Q)}-A_a)\rvert},
\end{equation}
where $\tr_p(d\gamma_a)$ is defined as in Notation and Remarks \ref{nota:don} part~\ref{it:don-trace}.  (The second equality follows from the fact that $u_0^p(a,a)$ is the identity transformation of $\Lambda^p (T^*_a M)$
as noted in Proposition~\ref{localparam} part~\ref{uzero}.)

Theorem~\ref{bkthm} part~\ref{it:bkthmlocal} also follows with
$$c^p_k(\eta,\gamma,a)=\sum_{j=0}^k\,\frac{1}{j!}\Box^j_{\y}\left(H_a(\eta f^p_{k-j})\right)(0).$$

\textbf{Funding}: This work was supported by the Banff International Research Station and Casa Matem\'atica Oaxaca [19w5115]; the Association for Women in Mathematics National Science Foundation ADVANCE grant [1500481]; and the Swiss Mathematical Society [SMS Travel Grant to KG]. IMS was supported by the Leverhulme Trust grant [RPG-2019-055].

\textbf{Acknowledgments}:
We thank Carla Farsi for a helpful discussion of orbifold orientability.  We also thank the Banff International Research Station and Casa Matem\'atica Oaxaca for initiating our collaboration by hosting the 2019 Women in Geometry Workshop. In addition, we thank the Association for Women in Mathematics and the organizers of the Women In Geometry Workshop for supporting this collaboration. IMS thanks Prof.~Jacek Brodzki for providing her with excellent conditions to work.

\bibliographystyle{alpha}
\bibliography{colibrisbib}

\def\cprime{$'$}
\begin{thebibliography}{DGGW17}

\bibitem[ALR07]{ALR07}
Alejandro Adem, Johann Leida, and Yongbin Ruan.
\newblock {\em Orbifolds and stringy topology}, volume 171 of {\em Cambridge
  Tracts in Mathematics}.
\newblock Cambridge University Press, Cambridge, 2007.

\bibitem[BG90]{BG90}
Thomas~P. Branson and Peter~B. Gilkey.
\newblock The asymptotics of the {L}aplacian on a manifold with boundary.
\newblock {\em Comm. Partial Differential Equations}, 15(2):245--272, 1990.

\bibitem[BPP92]{BPP92}
Robert Brooks, Peter Perry, and Peter Petersen, V.
\newblock Compactness and finiteness theorems for isospectral manifolds.
\newblock {\em J. Reine Angew. Math.}, 426:67--89, 1992.

\bibitem[Buc99]{B99}
Bogdan Bucicovschi.
\newblock {\em Contributions to the complex powers and zeta function of
  elliptic pseudodifferential operators}.
\newblock ProQuest LLC, Ann Arbor, MI, 1999.
\newblock Thesis (Ph.D.)--The Ohio State University.

\bibitem[BY02]{BY02}
Bruce~C. Berndt and Boon~Pin Yeap.
\newblock Explicit evaluations and reciprocity theorems for finite
  trigonometric sums.
\newblock {\em Adv. in Appl. Math.}, 29(3):358--385, 2002.

\bibitem[Chi93]{YJC}
Yuan-Jen Chiang.
\newblock Spectral geometry of {$V$}-manifolds and its application to harmonic
  maps.
\newblock In {\em Differential geometry: partial differential equations on
  manifolds ({L}os {A}ngeles, {CA}, 1990)}, volume~54 of {\em Proc. Sympos.
  Pure Math.}, pages 93--99. Amer. Math. Soc., Providence, RI, 1993.

\bibitem[CJ19]{C19}
Francisco~C. Caramello~Jr.
\newblock Introduction to orbifolds, 2019.

\bibitem[CS12]{CS12}
Djurdje Cvijovi\'{c} and Hari~Mohan Srivastava.
\newblock Closed-form summations of {D}owker's and related trigonometric sums.
\newblock {\em J. Phys. A}, 45(37):374015, 10, 2012.

\bibitem[DGGW08]{DGGW08}
Emily~B. Dryden, Carolyn~S. Gordon, Sarah~J. Greenwald, and David~L. Webb.
\newblock Asymptotic expansion of the heat kernel for orbifolds.
\newblock {\em Michigan Math. J.}, 56(1):205--238, 2008.

\bibitem[DGGW17]{DGGW17}
Emily~B. Dryden, Carolyn~S. Gordon, Sarah~J. Greenwald, and David~L. Webb.
\newblock Erratum to ``{A}symptotic expansion of the heat kernel for
  orbifolds'' [{MR}2433665].
\newblock {\em Michigan Math. J.}, 66(1):221--222, 2017.

\bibitem[Don76]{D76}
Harold Donnelly.
\newblock Spectrum and the fixed point sets of isometries. {I}.
\newblock {\em Math. Ann.}, 224(2):161--170, 1976.

\bibitem[DP77]{DP77}
Harold Donnelly and V.~K. Patodi.
\newblock Spectrum and the fixed point sets of isometries. {II}.
\newblock {\em Topology}, 16(1):1--11, 1977.

\bibitem[DS09]{DS09}
Emily~B. Dryden and Alexander Strohmaier.
\newblock Huber's theorem for hyperbolic orbisurfaces.
\newblock {\em Canad. Math. Bull.}, 52(1):66--71, 2009.

\bibitem[Far01]{F01}
Carla Farsi.
\newblock Orbifold spectral theory.
\newblock {\em Rocky Mountain J. Math.}, 31(1):215--235, 2001.

\bibitem[FPS14]{FPS14}
Carla Farsi, Emily Proctor, and Christopher Seaton.
\newblock {$\Gamma$}-extensions of the spectrum of an orbifold.
\newblock {\em Trans. Amer. Math. Soc.}, 366(7):3881--3905, 2014.

\bibitem[Gaf58]{G58}
Matthew~P. Gaffney.
\newblock Asymptotic distributions associated with the {L}aplacian for forms.
\newblock {\em Comm. Pure Appl. Math.}, 11:535--545, 1958.

\bibitem[Gor12]{G12}
Carolyn~S. Gordon.
\newblock Orbifolds and their spectra.
\newblock In {\em Spectral geometry}, volume~84 of {\em Proc. Sympos. Pure
  Math.}, pages 49--71. Amer. Math. Soc., Providence, RI, 2012.

\bibitem[GR03]{GR03}
Carolyn~S. Gordon and Juan~Pablo Rossetti.
\newblock Boundary volume and length spectra of {R}iemannian manifolds: what
  the middle degree {H}odge spectrum doesn't reveal.
\newblock {\em Ann. Inst. Fourier (Grenoble)}, 53(7):2297--2314, 2003.

\bibitem[GR21]{GR21}
Carolyn~S. Gordon and Juan~Pablo Rossetti.
\newblock Correction to ``{B}oundary volume and length spectra of {R}iemannian
  manifolds: what the middle degree {H}odge spectrum doesn't reveal''.
\newblock {\em Annales de l'Institut Fourier, to appear}, 2021.
\newblock \url{https://arxiv.org/pdf/math/0111016.pdf}.

\bibitem[Har16]{H16}
John Harvey.
\newblock Equivariant {A}lexandrov geometry and orbifold finiteness.
\newblock {\em J. Geom. Anal.}, 26(3):1925--1945, 2016.

\bibitem[Kob58]{K58}
Shoshichi Kobayashi.
\newblock Fixed points of isometries.
\newblock {\em Nagoya Math. J.}, 13:63--68, 1958.

\bibitem[LM17]{LM17}
Benjamin Linowitz and Jeffrey~S. Meyer.
\newblock On the isospectral orbifold-manifold problem for nonpositively curved
  locally symmetric spaces.
\newblock {\em Geom. Dedicata}, 188:165--169, 2017.

\bibitem[Par04]{P03}
JeongHyeong Park.
\newblock The spectral geometry of {E}instein manifolds with boundary.
\newblock {\em J. Korean Math. Soc.}, 41(5):875--882, 2004.

\bibitem[Pat70]{P70}
V.~K. Patodi.
\newblock Curvature and the fundamental solution of the heat operator.
\newblock {\em J. Indian Math. Soc.}, 34(3-4):269--285 (1971), 1970.

\bibitem[Pat71]{P71}
V.~K. Patodi.
\newblock Curvature and the eigenforms of the {L}aplace operator.
\newblock {\em J. Differential Geometry}, 5:233--249, 1971.

\bibitem[Pro12]{Pr12}
Emily Proctor.
\newblock Orbifold homeomorphism finiteness based on geometric constraints.
\newblock {\em Ann. Global Anal. Geom.}, 41(1):47--59, 2012.

\bibitem[PS10]{PrSt10}
Emily Proctor and Elizabeth Stanhope.
\newblock Spectral and geometric bounds on 2-orbifold diffeomorphism type.
\newblock {\em Differential Geom. Appl.}, 28(1):12--18, 2010.

\bibitem[RS20]{RS20}
Sean Richardson and Elizabeth Stanhope.
\newblock You can hear the local orientability of an orbifold.
\newblock {\em Differential Geom. Appl.}, 68:101577, 7, 2020.

\bibitem[RSW08]{RSW08}
Juan~Pablo Rossetti, Dorothee Schueth, and Martin Weilandt.
\newblock Isospectral orbifolds with different maximal isotropy orders.
\newblock {\em Ann. Global Anal. Geom.}, 34(4):351--366, 2008.

\bibitem[Sco83]{Scott83}
Peter Scott.
\newblock The geometries of {$3$}-manifolds.
\newblock {\em Bull. London Math. Soc.}, 15(5):401--487, 1983.

\bibitem[SSW06]{BSW06}
Naveed Shams, Elizabeth Stanhope, and David~L. Webb.
\newblock One cannot hear orbifold isotropy type.
\newblock {\em Arch. Math. (Basel)}, 87(4):375--384, 2006.

\bibitem[Sta05]{St05}
Elizabeth Stanhope.
\newblock Spectral bounds on orbifold isotropy.
\newblock {\em Ann. Global Anal. Geom.}, 27(4):355--375, 2005.

\bibitem[Sut10]{S10}
Craig~J. Sutton.
\newblock Equivariant isospectrality and {S}unada's method.
\newblock {\em Arch. Math. (Basel)}, 95(1):75--85, 2010.

\bibitem[Thu97]{Thurston97}
William~P. Thurston.
\newblock {\em Three-dimensional geometry and topology. {V}ol. 1}, volume~35 of
  {\em Princeton Mathematical Series}.
\newblock Princeton University Press, Princeton, NJ, 1997.
\newblock Edited by Silvio Levy.

\bibitem[Wei12]{W12}
Martin Weilandt.
\newblock Isospectral metrics on weighted projective spaces.
\newblock {\em New York J. Math.}, 18:421--449, 2012.

\end{thebibliography}

\end{document}